\newtheorem{theorem}{Theorem}[section]
\newtheorem{lemma}[theorem]{Lemma}
\newtheorem{corollary}[theorem]{Corollary}
\newtheorem{proposition}[theorem]{Proposition}
\theoremstyle{definition}
\newtheorem{definition}[theorem]{Definition}
\newtheorem{example}[theorem]{Example}
\newtheorem{question}[theorem]{Question}
\theoremstyle{remark}
\numberwithin{equation}{section}
\title[On Borel $\sigma$-algebras of topologies generated by two-point selections]{On  Borel $\sigma$-algebras of  topologies generated by two-point selections}
\author{S. Garcia-Ferreira}
\address{Centro de Ciencias Matem\'aticas \\
  Universidad Nacional Aut\'onoma de M\'exico, Campus Morelia,\\
  Apartado Postal 61-3, Santa Mar\'{\i}a, \\
58089, Morelia, Michoac\'an, M\'exico}
\email{sgarcia@matmor.unam.mx}
\subjclass[2010]{Primary 28A05, 54H05, secondary 26A21}
\keywords{two-point selection, real line, Borel $\sigma$-algebra}
\date{}
\thanks{ PAPIIT grant no. IN-100122 }
\begin{document} 
\maketitle

\begin{abstract} A two-point selection on a set $X$ is a function $f:[X]^2 \to X$ such that $f(F) \in F$ for every $F \in [X]^2$. It is known that every two-point selection
$f:[X]^2 \to X$ induced a topology $\tau_f$ on $X$ by using the relation: $x \leq y$ if either $f(\{x,y\}) = x$ or $x = y$, for every $x, y \in X$. We are mainly concern with the  two-point selections on the real line $\mathbb{R}$. In this paper, we study the $\sigma$-algebras of Borel, each one denoted by $\mathcal{B}_f(\mathbb{R})$, of   the  topologies $\tau_f$'s defined by a two-point selection $f$ on $\mathbb{R}$. We prove that the assumption $\mathfrak{c} = 2^{< \mathfrak{c}}$ implies the existence
 of a family $\{ f_\nu : \nu < 2^\mathfrak{c} \}$ of two-point selections on $\mathbb{R}$ such that 
$\mathcal{B}_{f_\mu}(\mathbb{R}) \neq \mathcal{B}_{f_\nu}(\mathbb{R})$ for distinct $\mu,  \nu < 2^\mathfrak{c}$.
By assuming that $\mathfrak{c} = 2^{< \mathfrak{c}}$ and $\mathfrak{c}$ is regular, we also show that there are $2^{2^\mathfrak{c}}$ many $\sigma$-algebras on 
$\mathbb{R}$ that contain $[\mathbb{R}]^{\leq \omega}$  and none of them  is the $\sigma$-algebra of Borel of $\tau_f$  for any two-point selection $f: [\mathbb{R}]^2 \to \mathbb{R}$.  Several examples are given to  illustrate some properties of these Borel $\sigma$-algebras. 
\end{abstract}

\section{Introduction}

The two-point selections have been studied by several mathematicians in the areas of Topology and Measure Theory (see for instance \cite{ag}, \cite{c}, \cite{gt}, \cite{gmn}, \cite{gmnt}, \cite{gn-1} and  \cite{ns} and the survey paper \cite{gutev}).  One important application of the two-point selections, given in \cite{gn-1}, is  the definition of topologies  which have several interesting properties (see \cite{gt}, \cite{gn-1} and \cite{gutev}).  In the article \cite{ag}, the authors introduced outer measures on $\mathbb{R}$ by mean of two-point selections on $\mathbb{R}$ and provided several examples of these new outer measures. Years later, the $\sigma$-ideals of measure zero sets of these outer measures were studied in the paper \cite{gty}. In particular, it was shown that  Martin Axiom implies that:
\begin{enumerate}
\item[$\bullet$] If  $\mathcal{I}$ is a $<\mathfrak{c}$-ideal on $\mathbb{R}$ with cofinality equal to $\mathfrak{c}$\footnote{$\mathfrak{c}$ denotes de cardinality of the real numbers},  then there is a two-point selection $f$ such that $\mathcal{I}$ is the ideal of measure zero sets with respect to the outer measure induced by $f$.

\item[$\bullet$] There exists a two-point selection $f$ such that the $\sigma$-ideal  of meager subsets of $\mathbb{R}$ is the ideal of measure zero sets with respect to the outer measure induced by $f$.

\item[$\bullet$]  $CH$ is equivalent to the existence of a two-point selection $f$ on $\mathbb{R}$ for which:
\[
\lambda_f(A)=
\begin{cases}
 0  & \text{ if $|A| \leq \omega$,}\\
+\infty & \text{ otherwise,}
\end{cases}
\]
where $\lambda_f$ is the outer measure on $\mathbb{R}$ associated to $f$.
\end{enumerate}
Also in the paper \cite{gty}, it is proved that there are $2^\mathfrak{c}$ many pairwise distinct $\sigma$-ideals on $\mathbb{R}$ which are the measure zero ideals of outer measures defined by  two-point selections on $\mathbb{R}$. On the other hand, it was shown  in \cite{hm} that the Sorgenfrey topology on $\mathbb{R}$ is induced by a two-point selection, say $f_S$, and we  know that $\tau_E$ and $\tau_{f_S}$ have different topological properties but they have the same Borel $\sigma$-algebra. After mentioning all these facts,  it is natural to address our attention to the study of the Borel $\sigma$-algebras on $\mathbb{R}$ generated by  the topologies induced by two-point selections on $\mathbb{R}$.  To explain  the results obtained in this work  we denote by $\tau_f$ the topology on $\mathbb{R}$ induced by a two-point selection $f: [\mathbb{R}]^2 \to \mathbb{R}$ and $\mathcal{B}_{f}(\mathbb{R})$ denotes the Borel $\sigma$-algebra generated by $\tau_f$. Now, we formulate  two questions which  motivated  the results of this article:

\begin{question}\label{q1} Is there a family $\{ f_\nu : \nu < 2^\mathfrak{c} \}$ of two-point selections on $\mathbb{R}$ such that 
$\mathcal{B}_{f_\mu}(\mathbb{R}) \neq \mathcal{B}_{f_\nu}(\mathbb{R})$ for distinct $\mu,  \nu < 2^\mathfrak{c}$?
\end{question}

In the third section, we answer Question \ref{q1}  under the assumption  $\mathfrak{c} = 2^{< \mathfrak{c}}$  (Theorem \ref{r1}).

\medskip

In Theorem  \ref{cocountablenoBorel}, we observe that the countable cocountable $\sigma$-algebra on $\mathbb{R}$ is not the Borel $\sigma$-algebra of any $\tau_f$, where $f \in Sel_2(\mathbb{R})$. Base on this, we suggest the following question.

\begin{question}\label{q2} Are there $2^{2^\mathfrak{c}}$ many $\sigma$-algebras on  $\mathbb{R}$ that contain $[\mathbb{R}]^{\leq \omega}$  and none of them  is the $\sigma$-algebra of Borel of $\tau_f$  for any  $f \in Sel_2(\mathbb{R})$?
\end{question}

Recall that the maximal number of $\sigma$-algebras on $\mathbb{R}$ is  $2^{2^{\mathfrak{c}}}$.  We show that  Question \ref{q2} holds in model of $ZFC$ where   
$\mathfrak{c} = 2^{< \mathfrak{c}}$ and $\mathfrak{c}$ is a regular cardinal (see Corollary \ref{c1}). 

\medskip

  Besides, in this paper, we give several examples of two-point selections on $\mathbb{R}$ whose  corresponding Borel  $\sigma$-algebras have some interesting properties. 

\section{Preliminaries}

For the facts about measure theory  that we shall only  mention  the reader is referred to the book \cite{cohn}.

\medskip

 Assume that $X$ is a set. Then $[X]^2 := \{ A \subseteq X : |A| = 2 \}$, $[X]^{<\omega} := \{ A \subseteq X :  A \ \text{is finite} \ \}$ and $[X]^{\leq \omega} := \{ A \subseteq X : |A| \leq \omega \}$.   A  function $f: [X]^2 \to X$
is called a {\it two-point selection} if $f(F) \in F$, for all $F \in [X]^2$.  The symbol $Sel_2(X)$ will denote the set of all
two-point selections defined on  $X$.  Every two-point selection $f:[X]^2 \to X$ has an {\it opposite two-point  selection} $\hat{f}:[X]^2 \to X$ defined as $\hat{f}\big(\{x, y\}\big)= y$ iff  $f\big(\{x, y\}\big)= x$, for every $x, y \in X$. For a given two-point selection $f$ on $X$, we say that a point $x\in X$ is {\it $f$-minimum} if $f\big(\{x, y\}\big)= x$ for every $y\in X \setminus \{x\}$ and a point $x \in X$ is an {\it $f$-maximum} if $f\big(\{x, y\}\big)= y$ for every $y\in X \setminus \{x\}$.

\medskip

 The Euclidian (standard) order  on the real numbers will be simply  denoted by $\leq$. The most common example of a two-point selection on the real line is the Euclidian two-point selection $f_{E}:\left[\mathbb{R}\right]^{2}\to \mathbb{R}$ given by the rule  $f_{E}\left(\{r,s\}\right)=r \quad \hbox{iff} \quad r<s$, for every $\{r, s\} \in [\mathbb{R}]^2$.  In what follows, we shall only consider  two-point selections defined on $\mathbb{R}$.

\medskip

Following E. Michael \cite{mi},  every two-point selection defines an order-like relation: For 
$f \in Sel_2( \mathbb{R})$ and $\{r, s\} \in [\mathbb{R}]^2$, we say $r <_f s$ if $f(\{r, s\}) = r$, and  for every $r, s \in \mathbb{R}$ we define $r \leq_f s$ if either $r <_s y$ or $r = s$. It is evident that the relation $\leq_f$ is reflexive, antisymmetric and linear, but, in general,
$\leq_f$ is not transitive. If $f \in Sel_2( \mathbb{R})$  and $r, s \in \mathbb{R}$, then the $f$-intervals are  $(r, s)_{f}:=\Big\{x \in \mathbb{R} \, : \, r <_f x <_{f} s\Big\}$, $(r ,s]_{f}:=\Big\{x \in \mathbb{R} \, : \, r <_{f} x <_f s \Big\}$, $(r,\rightarrow)_{f}:=\Big\{x \in \mathbb{R} \, : \, r <_f x \Big\}$, etc. For the Euclidian intervals we just write $(r ,s)$, $(r, s]$, $(r,\rightarrow)$, etc. In the notation $(r, s)$ we shall understand that $r < s$ and  observe that in the general notation $(r, s)_f$ we do not necessarily required  that $r <_f s$. As in the ordered spaces, an order-like relation defined by a two-point selection also induces a topology.
Indeed,  for an $f \in Sel_2(\mathbb{R})$ the topology on $\mathbb{R}$ generated by all open intervals $(\leftarrow,r)_f$ 
and $(r,\rightarrow)_f$, for $r \in \mathbb{R}$, will be denoted by $\tau_f$ and  $\mathcal{C}_f : = \{ C \subseteq \mathbb{R} :  \mathbb{R} \setminus C \in \tau_f \}$. 
For practical purposes, for every  $f \in Sel_2(\mathbb{R})$ the interior and closure of $A \subseteq \mathbb{R}$ in the topology $\tau_f$ will be simply denoted by
$Inf_f(A)$ and $cl_f(A)$, respectively. The Euclidian topology on $\mathbb{R}$ will be denoted by $\tau_E$ and its closed subsets by $\mathcal{C}_E$. The next general notation is going to be used frequently:

For  $f \in Sel_2(\mathbb{R})$ and $A, B \in [\mathbb{R}]^{<\omega} \setminus \{\emptyset\}$ with $A \cap B = \emptyset$, we set
$$
(A,B)_f := (\bigcap_{a \in A}(a,\rightarrow)_f) \cap (\bigcap_{b \in B}(\leftarrow,b)_f), 
$$
$$
[A,B)_f := (\bigcap_{a \in A}[a,\rightarrow)_f) \cap (\bigcap_{b \in B}(\leftarrow,b)_f),
$$
$$
(A,B]_f := (\bigcap_{a \in A}(a,\rightarrow)_f) \cap (\bigcap_{b \in B}(\leftarrow,b]_f) \ \text{and} 
$$
$$
[A,B]_f := (\bigcap_{a \in A}[a,\rightarrow)_f) \cap (\bigcap_{b \in B}(\leftarrow,b]_f). 
$$
If $f_E$ is the Euclidian selection, then we simply write $(A,B)$, $[A,B]$, $(A,B]$ and $[A,B]$ whenever $A, B \in [\mathbb{R}]^{<\omega} \setminus \{\emptyset\}$ and $A \cap B = \emptyset$. We remark that if a two-point selection $f$ does not have neither a $f$-maximal point nor a $f$-minimal point, then $\{ (A,B)_f : A, B \in [\mathbb{R}]^{<\omega} \setminus \{\emptyset\} \ \text{and} \ A \cap B = \emptyset\}$ is a base for $\tau_f$. An important  topological property of the topologies
$\tau_f$'s is stated in the next theorem.

\begin{theorem}\label{1.3}{\bf \cite{hm}}
For  for every set $X$ and for every  $f \in Sel_2(X)$, the
topology $\tau_f$ is Tychonoff (i. e., completely regular  and Hausdorff).
\end{theorem}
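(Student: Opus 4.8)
It is convenient to deal with the easy separation properties first and reduce to one core construction. The space $(X,\tau_f)$ is $T_1$, since $X\setminus\{x\}=(\leftarrow,x)_f\cup(x,\rightarrow)_f$ is $\tau_f$-open for each $x$; as a completely regular $T_1$ space is Hausdorff, it remains to prove complete regularity. Here the usual reduction applies: if $C$ is $\tau_f$-closed and $x\notin C$, pick subbasic open $S_1,\dots,S_n$ with $x\in\bigcap_{i\le n}S_i\subseteq X\setminus C$; if for each $i$ there is a continuous $g_i\colon X\to[0,1]$ with $g_i(x)=0$ and $g_i\equiv 1$ on $X\setminus S_i$, then $g:=\max_i g_i$ is continuous, $g(x)=0$, and $g\equiv 1$ on $C\subseteq\bigcup_i(X\setminus S_i)$. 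Since, moreover, $(\leftarrow,r)_f=(r,\rightarrow)_{\hat f}$ and $\tau_f=\tau_{\hat f}$, it is enough to treat the rays $(r,\rightarrow)_f$ for every two-point selection. Thus the theorem reduces to the single assertion: \emph{if $r<_f x$, there is a continuous $g\colon(X,\tau_f)\to[0,1]$ with $g(x)=0$ and $g\equiv 1$ on $(\leftarrow,r]_f=X\setminus(r,\rightarrow)_f$.}

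I would obtain $g$ from a Urysohn ladder: $\tau_f$-open sets $U_q$, indexed by the dyadic rationals $q\in[0,1]$, with $x\in U_0$, $U_1=(r,\rightarrow)_f$, and $cl_f(U_q)\subseteq U_{q'}$ whenever $q<q'$; then $g(y):=\inf\{q:y\in U_q\}$, and $g(y):=1$ off $\bigcup_qU_q$, is continuous, vanishes at $x$, and equals $1$ on $(\leftarrow,r]_f$. The $U_q$ are built by the standard dyadic bisection, whose generic step — including the very first, which is just regularity of $\tau_f$ at $x$ with respect to $(r,\rightarrow)_f$ — is an interpolation: given $\tau_f$-open $V\subseteq W\subseteq(r,\rightarrow)_f$ with $cl_f(V)\subseteq W$, find $\tau_f$-open $V'$ with $cl_f(V)\subseteq V'\subseteq cl_f(V')\subseteq W$.

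The interpolation step is the heart of the proof and, I expect, the main obstacle: $\le_f$ is not transitive, so one cannot ``cut below a point'' as in a linearly ordered space. What makes it work is that the rays are clopen away from their pivots. Indeed, $(\leftarrow,p)_f$ and $(p,\rightarrow)_f$ partition the open subspace $X\setminus\{p\}$ into two clopen pieces, so any intersection $S_1\cap\dots\cap S_k$ of subbasic rays is clopen in the open subspace $X\setminus\{p_1,\dots,p_k\}$ (with $p_j$ the pivot of $S_j$), whence $cl_f(S_1\cap\dots\cap S_k)\subseteq(S_1\cap\dots\cap S_k)\cup\{p_1,\dots,p_k\}$: the topological boundary of such a basic set is a finite set of pivots. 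One therefore keeps every rung $U_q$ an intersection of finitely many rays always including $(r,\rightarrow)_f$ (so $U_q\subseteq(r,\rightarrow)_f$ automatically), and performs the bisection by adjoining, at each stage, finitely many new rays whose pivots expel the finitely many boundary pivots of the current rung from the next one; a transfinite recursion that runs through all potential violations $p\in cl_f(U_q)\setminus U_{q'}$ ($q<q'$) and kills each in turn yields, in the end, a ladder with $cl_f(U_q)\subseteq U_{q'}$ for all $q<q'$. The substantive point — where one genuinely uses that $f$ is a two-point selection and not an arbitrary family of ``cuts'' — is that new rays with the required pivots can always be found; granting this, the continuity of $g$ is the routine Urysohn verification and the theorem follows.
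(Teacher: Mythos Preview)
The paper does not prove this theorem at all: it is quoted from Hru\v{s}\'ak--Mart\'{\i}nez-Ru\'{\i}z \cite{hm} and stated without argument, so there is no in-paper proof to compare your sketch against.

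On the merits of your sketch itself: the reductions are correct (to $T_1$ plus complete regularity, then to a single subbasic ray via the $\max$ trick and the symmetry $\tau_f=\tau_{\hat f}$), and your structural observation---that a finite intersection $S_1\cap\cdots\cap S_k$ of subbasic rays is clopen in $X\setminus\{p_1,\dots,p_k\}$, hence has boundary contained in the finite pivot set---is exactly the right engine. The shrinking half of your bisection also works cleanly: if $r_1\in(r,x)_f$ then $U_{1/2}:=(r,\to)_f\cap(r_1,\to)_f$ contains $x$ and has $cl_f(U_{1/2})\subseteq U_{1/2}\cup\{r_1\}\subseteq U_1$, and one can iterate. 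Where your outline has a genuine gap is the \emph{enlarging} half of the interpolation (producing $U_{3/4}$ with $cl_f(U_{1/2})\subseteq U_{3/4}$ and $cl_f(U_{3/4})\subseteq U_1$). You commit to keeping every rung a finite \emph{intersection} of rays, but enlarging $U_{1/2}$ to swallow the boundary pivot $r_1$ while staying inside $(r,\to)_f$ does not obviously produce another such intersection: one seems to need a neighbourhood of $r_1$ glued on, i.e.\ a finite \emph{union} of such intersections. Your finite-boundary observation survives that relaxation, so the strategy is salvageable, but as written the recursion is not justified---and you say as much when you note that the existence of ``new rays with the required pivots'' is the substantive point you are granting rather than proving. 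In short: right idea, honest about where the work lies, but the sketch stops just before the hard step that the cited reference actually carries out.
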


It the paper \cite{gt}, the authors constructed a two-point selection $f \in   Sel_2(\mathbb{P})$, where $\mathbb{P}$ denotes the irrational numbers, such that  
$(\mathbb{P},\tau_f)$ is not normal.

\medskip

Given a two-point selection $f \in Sel_2(\mathbb{R})$, the Borel  $\sigma$-algebra generated by the topology $\tau_f$ will be simply denoted by $\mathcal{B}_f(\mathbb{R})$, and we shall use the notation $\mathcal{B}(\mathbb{R})$ for the Borel $\sigma$-algebra generated by the Euclidian topology on  $\mathbb{R}$. 
 Observe that  $\mathcal{B}_f(\mathbb{R}) = \mathcal{B}_{\hat{f}}(\mathbb{R})$, recall that $\hat{f}$ denotes the opposite two-point selection of $f$,  for every $f \in Sel_2(\mathbb{R})$. 

\medskip

For every  uncountable set $X$, we let $\mathcal{C}(X) = \{ A \subseteq X : \ \text{either } \  A \ \text{ is countable} \  \text{or} \ X \setminus A \ \text{is countable}  \}$ denote the countable cocountable $\sigma$-algebra on $X$. We remark that  $\mathcal{C}(\mathbb{R} ) \subseteq \mathcal{B}_f(\mathbb{R})$ for all $f \in Sel_2(\mathbb{R})$.

\medskip

The next lemma will help us to avoid the $f$-minimal points and the $f$-maximal points of a two-point selection $f \in Sel_2(\mathbb{R})$ to study our Borel $\sigma$-algebras. 

\begin{lemma}\label{lem0}  For every $f \in   Sel_2(\mathbb{R})$ there is  $g \in   Sel_2(\mathbb{R})$ without neither a $g$-minimal point nor a $g$-maximal point such that $\mathcal{B}_f(\mathbb{R}) = \mathcal{B}_g(\mathbb{R})$.
\end{lemma}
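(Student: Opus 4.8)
The plan is to build $g$ by changing $f$ on only finitely many pairs, so that $f$ and $g$ differ on at most two members of $[\mathbb{R}]^2$; then $\tau_f$ and $\tau_g$ induce the same topology off a finite set, and since $\mathcal{C}(\mathbb{R})$ — in particular every finite set — already lies in both Borel $\sigma$-algebras, this will force $\mathcal{B}_f(\mathbb{R})=\mathcal{B}_g(\mathbb{R})$.

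First I would observe that $f$ has at most one $f$-minimum point $m$ and at most one $f$-maximum point $M$, since two $f$-minima (or two $f$-maxima) would contradict the antisymmetry of $\leq_f$; if $f$ has neither, we take $g=f$. Assume then that both $m$ and $M$ exist (if only one exists, one uses the single corresponding modification below, analogously). The idea is to "demote" $m$ and "promote" $M$: pick $a,b\in\mathbb{R}\setminus\{m,M\}$ with $a\ne b$, and let $g$ agree with $f$ everywhere except that $g(\{m,a\})=a$ and $g(\{M,b\})=M$. This plainly destroys the extremality of $m$ and of $M$; the point of choosing $a,b$ carefully is to prevent a new extremal point from appearing, for which it suffices to take $a$ having some point $\ne m$ below it in the $f$-order and, dually, $b$ having some point $\ne M$ above it. A short tournament argument shows the set of $a$'s failing the first requirement has at most one element besides $m$, and similarly for $b$; since $\mathbb{R}$ is infinite, admissible $a\ne b$ exist. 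Granting such a choice, a finite case check over the affected points $\{m,a,M,b\}$ shows $g$ has neither a $g$-minimum nor a $g$-maximum.

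The core is then to prove $\mathcal{B}_f(\mathbb{R})=\mathcal{B}_g(\mathbb{R})$. Put $S=\{m,a,M,b\}$. Since $f$ and $g$ differ only on pairs contained in $S$, for every $r$ the symmetric difference of $(\leftarrow,r)_f$ and $(\leftarrow,r)_g$ is contained in $S$, and likewise for $(r,\rightarrow)_f$ and $(r,\rightarrow)_g$; hence the natural subbases of the subspace topologies that $\tau_f$ and $\tau_g$ induce on $\mathbb{R}\setminus S$ coincide, so those subspace topologies are equal. Now let $U\in\tau_g$. Then $U\cap(\mathbb{R}\setminus S)$ is open in $\tau_g\restriction(\mathbb{R}\setminus S)=\tau_f\restriction(\mathbb{R}\setminus S)$, so $U\cap(\mathbb{R}\setminus S)=V\cap(\mathbb{R}\setminus S)=V\setminus S$ for some $V\in\tau_f$; since $\tau_f$ is $T_1$ by Theorem \ref{1.3} and $S$ is finite, $V\setminus S\in\tau_f$, so $U\setminus S=V\setminus S\in\tau_f\subseteq\mathcal{B}_f(\mathbb{R})$. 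As $U\cap S$ is finite and so belongs to $\mathcal{C}(\mathbb{R})\subseteq\mathcal{B}_f(\mathbb{R})$, we conclude $U=(U\setminus S)\cup(U\cap S)\in\mathcal{B}_f(\mathbb{R})$. Thus $\tau_g\subseteq\mathcal{B}_f(\mathbb{R})$, hence $\mathcal{B}_g(\mathbb{R})\subseteq\mathcal{B}_f(\mathbb{R})$; the reverse inclusion follows by the same argument with the roles of $f$ and $g$ exchanged, using that $\tau_g$ is also $T_1$ and $\mathcal{C}(\mathbb{R})\subseteq\mathcal{B}_g(\mathbb{R})$.

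I expect the only genuine obstacle to be the bookkeeping in selecting $a,b$ and verifying that $g$ has no extremal points; the rest rests on the simple but essential observation that altering $f$ on finitely many pairs perturbs each subbasic open set by a finite set only, which is invisible to a $\sigma$-algebra containing $\mathcal{C}(\mathbb{R})$.
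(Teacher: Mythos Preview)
Your argument is correct and in fact cleaner than the paper's. The paper proceeds differently: it fixes a countably infinite set $\{r_n:n\in\mathbb{N}\}\subseteq\mathbb{R}\setminus\{p,q\}$ and redefines $g$ so that the even-indexed $r_{2n}$ form a strictly $<_g$-decreasing sequence below every other point and the odd-indexed $r_{2n+1}$ a strictly $<_g$-increasing sequence above every other point; it then writes down explicit identities such as $[p,b)_f\setminus\{r_n\}=\bigcap_n(r_{2n},b)_g$ to match up the two Borel $\sigma$-algebras modulo a fixed countable set. Your route avoids this countable bookkeeping entirely: by altering $f$ on only two unordered pairs you force every subbasic ray to change by at most a subset of a four-point set $S$, and then the $T_1$ property of $\tau_f$ together with $\mathcal{C}(\mathbb{R})\subseteq\mathcal{B}_f(\mathbb{R})$ does the rest. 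The paper's construction has the minor advantage that it produces a $g$ whose order type is visibly unbounded in both directions (via explicit cofinal sequences), but for the stated conclusion your finite perturbation is both sufficient and conceptually more transparent: it isolates the general principle that modifying a two-point selection on finitely many pairs never changes the Borel $\sigma$-algebra.
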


\begin{proof} Without loss of generality suppose that $p \in \mathbb{R}$ is an $f$-minimal point and $q \in \mathbb{R}$ is an $f$-maximal point (of course, the argument also hold when only one of these two points exists). Take an infinite subset
$\{ r_n : n \in \mathbb{N} \}$ of $\mathbb{R} \setminus \{p,q\}$. For every $n \in \mathbb{N}$  we define:
\begin{enumerate} 
\item[$i)$] $r_{2n} <_g  r$  for every $r \in \mathbb{R} \setminus (\{ r_m : m \in \mathbb{N}\}$,  

\item[$ii)$]  $r <_g r_{2n+1}$ for every  $r \in \mathbb{R} \setminus (\{ r_m : m \in \mathbb{N}\}$,

\item[$iii)$]  $r_{2n+2 } <_g r_{2n}$ and

\item[$iv)$]   $r_{2n+1 }<_g r_{2n+3}$.
\end{enumerate}
For other pair of points not yet considered $g$ is defined as $f$. Observe that $g$ does not have neither a $g$-minimal point nor a $g$-maximal point. Recall that  $\{  A \subseteq \mathbb{R} : |A| = \omega \ \text{and} \  |\mathbb{R} \setminus A| = \omega  \}$ is contained in  both $\mathcal{B}_f(\mathbb{R})$ and $\mathcal{B}_g(\mathbb{R})$. Put $P := \{ r_{2n} : n \in \mathbb{N} \}$ and $Q := \{ r_{2n+1} : n \in \mathbb{N} \}$. Now, fix $A, B \in [\mathbb{R}]^{< \omega} \setminus \{\emptyset\}$ with $A \cap B = \emptyset$ and define $A_1 := A \cap Q$ and $A_2 := A \setminus A_1$ and  $B_1 := B \cap P$ and $B_2 := B \setminus B_1$. Then we have that:
\begin{enumerate}
\item $(A,B)_f \setminus \{ r_n : n \in \mathbb{N} \} = ((A\setminus A_1) \cup B_1,(B \setminus B_1) \cup A_1)_g \setminus \{ r_n : n \in \mathbb{N} \}$,

\item $[p,b)_f  \setminus \{ r_n : n \in \mathbb{N} \} = \bigcap_{n \in \mathbb{N}}(r_{2n},b)_g$ for every $b \in  \mathbb{R} \setminus (\{ r_m : m \in \mathbb{N}\} \cup \{p\})$, and

\item $(a,q]_f  \setminus \{ r_n : n \in \mathbb{N} \} = \bigcap_{n \in \mathbb{N}}(a,r_{2n+1})_g$ for every $a \in \mathbb{R} \setminus (\{ r_m : m \in \mathbb{N}\} \cup \{q\})$.
\end{enumerate}
Hence, we deduce that $\mathcal{B}_f(\mathbb{R}) =  \mathcal{B}_g(\mathbb{R})$.
\end{proof}

Based on Lemma \ref{lem0}, we shall assume that all our two-point selections do not have neither a minimal point nor a maximal point. 

\medskip

Let us recall how an outer measure on $\mathbb{R}$ is constructed in \cite{ag} by using a two-point selection:

\medskip

If $f: [\mathbb{R}]^2 \to \mathbb{R}$ is a two-point selection and $A \subseteq \mathbb{R}$, then we define
$$
\lambda_{f}(A):=\inf\Big\{\sum_{n \in \mathbb{N}} |s_{n} - r_{n}| \, : \, A \subseteq \bigcup_{n \in \mathbb{N}}(r_{n}, s_{n}]_{f} \Big\},
$$
if $A$ can be cover for a countable family of semi-open $f$-intervals, and   $\lambda_{f}(A)=+\infty$ otherwise. Observe that the Lebesgue outer measure $\lambda$ coincides with  the outer measure $\lambda_{f_E}$. The $\sigma$-algebra of $\lambda_f$-measurable subsets will be denoted by  $\mathcal{M}_f$ and $\mathcal{N}_f$ stands for the $\sigma$-ideal of $\lambda_f$-null subsets of $\mathbb{R}$,  for each $f \in Sel_2(\mathbb{R})$.  We remark that   $\mathcal{N}_f \subseteq \mathcal{M}_f$,  for all $f \in Sel_2(\mathbb{R})$. 

\medskip

To carry out our main goals we shall need the following tool and facts from Set Theory.

\begin{definition} Let $\kappa$ be an infinite cardinal number. An infinite  family $\mathcal{A} \subseteq [\kappa]^\kappa$ is called $\kappa$-{\it almost disjoint}
if $|A \cap B| < \kappa$ for distinct $A, B \in \mathcal{A}$.
\end{definition}

For an infinite cardinal number $\kappa$, we set $2^{< \kappa} := \sup \{ 2^\lambda : \lambda < \kappa \ \text{and} \ \lambda \ \text{is a cardinal number}  \}$.
The following lemma is Corollary $12.3(c)$ of the book \cite{cn74}.

\begin{lemma}\label{cn} If $\kappa$ is an infinite cardinal number such that $\kappa = 2^{< \kappa}$, then there is a $\kappa$-{\it almost disjoint}
family $\mathcal{A}$ with $|\mathcal{A}| = 2^\kappa$. 
\end{lemma}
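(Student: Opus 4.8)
The plan is to give the standard \emph{tree-of-branches} construction: realize the almost disjoint family as a family of branches through the binary tree of height $\kappa$, using the hypothesis $\kappa = 2^{<\kappa}$ precisely to guarantee that this tree has only $\kappa$ nodes, so that after relabelling the nodes by elements of $\kappa$ the branches become $\kappa$-sized subsets of $\kappa$.

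First I would pin down the cardinality of the tree. Let $T := \bigcup_{\alpha < \kappa} {}^{\alpha}2$ be the set of all functions from some ordinal $\alpha < \kappa$ into $2 = \{0,1\}$. Each ${}^{\alpha}2$ has cardinality $2^{|\alpha|}$, so $|T| \le \kappa \cdot 2^{<\kappa}$; combined with the trivial lower bound $|T| \ge \kappa$ (the constant-$0$ functions with domain $\alpha$, for $\alpha < \kappa$, are $\kappa$ distinct elements of $T$), the hypothesis $2^{<\kappa} = \kappa$ gives $|T| = \kappa$. Fix a bijection $e \colon T \to \kappa$; it then suffices to build the required family inside $\mathcal{P}(T)$ and push it forward along $e$.

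Next, for each $g \in {}^{\kappa}2$ set $A_g := \{\, g\restriction\alpha : \alpha < \kappa \,\} \subseteq T$, the set of proper initial segments of $g$. Then $|A_g| = \kappa$ since $\alpha \mapsto g\restriction\alpha$ is injective. Almost disjointness is the crux but is short: given $g \neq h$ in ${}^{\kappa}2$, let $\beta < \kappa$ be least with $g(\beta) \neq h(\beta)$; then $A_g \cap A_h = \{\, g\restriction\alpha : \alpha \le \beta \,\}$, which has cardinality $|\beta+1| < \kappa$ (here one uses that $\kappa$ is infinite, so $|\beta+1| < \kappa$ whenever $\beta < \kappa$). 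Finally $g \mapsto A_g$ is injective because $g = \bigcup A_g$, so $\{\, A_g : g \in {}^{\kappa}2 \,\}$ has cardinality $2^\kappa$, and $\mathcal{A} := \{\, e[A_g] : g \in {}^{\kappa}2 \,\}$ is a $\kappa$-almost disjoint family in $[\kappa]^\kappa$ of size $2^\kappa$.

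I do not expect a genuine obstacle: the only step needing care is the count $|T| = \kappa$, which is exactly where the hypothesis $\kappa = 2^{<\kappa}$ is used (and it is used nowhere else in the argument). Since the statement is quoted as Corollary 12.3(c) of \cite{cn74}, the three-paragraph argument above can either replace the citation or be omitted in favour of it.
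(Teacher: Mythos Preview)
Your argument is correct and is exactly the standard branch-through-the-binary-tree construction; the only point requiring the hypothesis $\kappa = 2^{<\kappa}$ is indeed the count $|T| = \kappa$, and the almost-disjointness and injectivity steps are handled cleanly. Note that the paper does not actually prove this lemma: it simply cites it as Corollary~12.3(c) of \cite{cn74}, so there is no in-paper proof to compare against --- your write-up supplies precisely the argument behind that citation.
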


The kind of almost disjoint families  that we will need, under certain set-theoretical assumption, are the next.
 
\begin{corollary}\label{cn-coro} If $\mathfrak{c} = 2^{< \mathfrak{c}}$, then there is a $\mathfrak{c}$-{\it almost disjoint}
family $\mathcal{A}$ with $|\mathcal{A}| = 2^\mathfrak{c}$. 
\end{corollary}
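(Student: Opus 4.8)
The plan is essentially to invoke Lemma \ref{cn} directly, since the corollary is the special case $\kappa = \mathfrak{c}$. First I would note that $\mathfrak{c}$ is an infinite cardinal number and, by the standing hypothesis of the corollary, it satisfies $\mathfrak{c} = 2^{<\mathfrak{c}}$; hence $\kappa := \mathfrak{c}$ fulfils the assumption of Lemma \ref{cn}. Applying that lemma with this choice of $\kappa$ produces a $\mathfrak{c}$-almost disjoint family $\mathcal{A} \subseteq [\mathfrak{c}]^{\mathfrak{c}}$ with $|\mathcal{A}| = 2^{\mathfrak{c}}$, which is exactly the conclusion sought.

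The only point worth spelling out is that, if one prefers the almost disjoint family to live on $\mathbb{R}$ (or on any fixed set of size $\mathfrak{c}$) rather than on the ordinal $\mathfrak{c}$, as will be convenient in the later constructions, one simply fixes a bijection $\varphi : \mathfrak{c} \to \mathbb{R}$ and replaces $\mathcal{A}$ by $\{ \varphi[A] : A \in \mathcal{A} \}$. Since $\varphi$ is a bijection it preserves cardinalities of sets and of pairwise intersections, so the transported family is still $\mathfrak{c}$-almost disjoint and still has size $2^{\mathfrak{c}}$. There is no genuine obstacle here: the corollary is a pure specialization of Lemma \ref{cn}, whose combinatorial content is already established in \cite{cn74}.
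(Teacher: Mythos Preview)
Your proposal is correct and matches the paper's approach exactly: the corollary is stated without proof as an immediate specialization of Lemma~\ref{cn} to $\kappa = \mathfrak{c}$. Your additional remark about transporting the family to $\mathbb{R}$ via a bijection is also sound and is precisely how the family is used later in Theorem~\ref{muchos}.
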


\section{Borel $\sigma$-algebras}

We start this section with a very illustrative trivial  example.

\begin{example}\label{discrete} For every infinite set $X$ there is $f \in Sel_2(X)$ such that $\tau_f = \mathcal{P}(X)$ is the discrete topology. 
\end{example}

\begin{proof}  Let $\{ P_\nu : \nu  < \mathfrak{c} \}$ be a partition of $X$ in infinite countable subsets  and put $P_\nu = \{ x^n_\nu : n \in \}$
for each $\nu < |X|$. Now, define  $f \in Sel_2(X)$ as follows:
$$
f\left(\{x^m_\mu,x^n_\nu\}\right):=  \left\{ \begin{array}{rcl} x^m_\mu&\mbox{if} \quad \mu = \nu  \quad \mbox{and} \quad m < n \\\\
 x^m_\mu&\mbox{if} \quad \mu < \nu\\\\
\end{array}\right.
$$
for each $m, n \in \mathbb{N}$ and for each $\mu, \nu < \mathfrak{c}$. It is not hard to see that $\tau_f$ is the discrete topology.
\end{proof}

Thus, in the case of the real line, there is $f \in Sel_2(\mathbb{R})$ such that  $\mathcal{B}_f(\mathbb{R}) = \mathcal{P}(\mathbb{R})$. We will see next that there are many $\sigma$-algebras of the form $\mathcal{B}_f(\mathbb{R})$ pairwise distinct. To have this done we need the following notions and preliminary results:

\begin{lemma}\label{tensor} Let $X$ be a set and let $\{Y, Z\}$ be a partition of $X$ in two nonempty sets. If $\mathcal{A}$ and $\mathcal{B}$ are $\sigma$-algebras on $Y$ and $Z$, respectively, then $\mathcal{A} \oplus \mathcal{B} : = \{ A \cup B : A \in \mathcal{A} \ \text{and} \ B \in \mathcal{B} \}$ is a  $\sigma$-algebra on $X$.
\end{lemma}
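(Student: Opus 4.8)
The plan is to verify directly that $\mathcal{A} \oplus \mathcal{B}$ satisfies the three defining properties of a $\sigma$-algebra on $X$: it contains $X$, it is closed under complementation in $X$, and it is closed under countable unions. Throughout I will repeatedly use that $Y \cap Z = \emptyset$ and $Y \cup Z = X$, so that every subset $S$ of $X$ splits uniquely as $S = (S \cap Y) \cup (S \cap Z)$ with the first piece a subset of $Y$ and the second a subset of $Z$.

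First, since $Y \in \mathcal{A}$ and $Z \in \mathcal{B}$ (each being the whole underlying set of its respective $\sigma$-algebra), we get $X = Y \cup Z \in \mathcal{A} \oplus \mathcal{B}$. Next, take an element $A \cup B$ with $A \in \mathcal{A}$, $B \in \mathcal{B}$. Because $A \subseteq Y$ and $B \subseteq Z$ and $\{Y,Z\}$ partitions $X$, one checks that $X \setminus (A \cup B) = (Y \setminus A) \cup (Z \setminus B)$; here $Y \setminus A \in \mathcal{A}$ and $Z \setminus B \in \mathcal{B}$ since each is a complement within its own space, so the complement again lies in $\mathcal{A} \oplus \mathcal{B}$. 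Finally, given a countable family $\{A_n \cup B_n : n \in \mathbb{N}\}$ with $A_n \in \mathcal{A}$ and $B_n \in \mathcal{B}$, rearranging the union gives $\bigcup_{n \in \mathbb{N}}(A_n \cup B_n) = \left(\bigcup_{n \in \mathbb{N}} A_n\right) \cup \left(\bigcup_{n \in \mathbb{N}} B_n\right)$, and $\bigcup_n A_n \in \mathcal{A}$, $\bigcup_n B_n \in \mathcal{B}$ by countable additivity of the two $\sigma$-algebras, so the union is in $\mathcal{A} \oplus \mathcal{B}$.

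There is no real obstacle here; the only point that needs a word of care is the complementation step, where one must use that $A$ and $B$ sit inside complementary pieces of the partition so that complementing $A \cup B$ in $X$ amounts to complementing $A$ in $Y$ and $B$ in $Z$ separately (this would fail if $Y$ and $Z$ overlapped). One should also note in passing that the representation $S = A \cup B$ of a member of $\mathcal{A} \oplus \mathcal{B}$ is unique, namely $A = S \cap Y$ and $B = S \cap Z$, which makes the set-theoretic manipulations above unambiguous, though uniqueness is not strictly needed for the three closure properties.
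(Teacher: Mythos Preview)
Your proof is correct: the direct verification of the three $\sigma$-algebra axioms is exactly what is needed, and the only delicate point---that complementation in $X$ splits as complementation in $Y$ and in $Z$ because $\{Y,Z\}$ is a partition---is handled correctly. The paper states this lemma without proof, treating it as routine, so your argument simply fills in the straightforward details the author omitted.
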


\begin{definition} Let $Z$ be a set and let $\{X, Y\}$ be a partition of $X$ in two sets. If $f \in Sel_2(X)$ and $g \in Sel_2(Y)$, respectively, then the function
$$
h\left(\{x,y\}\right):=  \left\{ \begin{array}{rcl} x \ \ \ \ \ &\mbox{if} \quad x \in X \quad \mbox{and} \quad y \in Y \\\\
 f(\{x,y\})&\mbox{if} \quad x, y \in X\\\\
g(\{x,y\})&\mbox{if} \quad x, y \in Y,\\\\
\end{array}\right.
$$
for every $\{x,y\} \in [Z]^2$, is a two-point selection which is denoted by $f \oplus g$. 
\end{definition}

\begin{theorem}\label{oplus} Let $\{X, Y\}$ be a partition of $\mathbb{R}$ in two infinite subsets. If $f \in Sel_2(X)$ and $g \in Sel_2(Y)$,  then 
$$
\mathcal{B}_{f\oplus g}(\mathbb{R}) = \mathcal{B}_f(X) \oplus \mathcal{B}_g(Y).
$$
\end{theorem}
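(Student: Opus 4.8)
The plan is to show the two $\sigma$-algebras contain one another. The key observation is that the selection $h = f \oplus g$ makes every point of $X$ strictly $h$-below every point of $Y$, so $X = (\leftarrow, y)_h$ for any $y \in Y$ and $Y = (x, \rightarrow)_h$ for any $x \in X$; in particular $X, Y \in \tau_h \subseteq \mathcal{B}_{f \oplus g}(\mathbb{R})$, and both are clopen. Consequently the subspace topology on $X$ induced by $\tau_h$ has as a subbase the sets $(\leftarrow, r)_h \cap X$ and $(r, \rightarrow)_h \cap X$ for $r \in \mathbb{R}$; for $r \in X$ these are exactly the $\tau_f$-subbasic sets $(\leftarrow, r)_f$ and $(r, \rightarrow)_f$ (by the definition of $h$ on pairs inside $X$), while for $r \in Y$ we get $(\leftarrow, r)_h \cap X = X$ and $(r, \rightarrow)_h \cap X = \emptyset$. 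Hence the subspace topology $\tau_h \restriction X$ equals $\tau_f$, and symmetrically $\tau_h \restriction Y = \tau_g$.

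First I would record the containment $\supseteq$: since $X$ is $\tau_h$-clopen, for every $A \in \mathcal{B}_f(X)$ we have $A = A \cap X$ is Borel in $(X, \tau_h \restriction X)$, hence $A \in \mathcal{B}_{f\oplus g}(\mathbb{R})$ (a Borel subset of a subspace whose underlying set is Borel in the whole space is Borel in the whole space — here it is even easier because $X$ is clopen, so $\tau_h \restriction X \subseteq \tau_h$ and one checks the collection of subsets of $X$ that are $\tau_h$-Borel is a $\sigma$-algebra containing $\tau_h \restriction X$). Likewise every $B \in \mathcal{B}_g(Y)$ lies in $\mathcal{B}_{f\oplus g}(\mathbb{R})$. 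Since $\mathcal{B}_{f\oplus g}(\mathbb{R})$ is closed under unions, $A \cup B \in \mathcal{B}_{f\oplus g}(\mathbb{R})$, giving $\mathcal{B}_f(X) \oplus \mathcal{B}_g(Y) \subseteq \mathcal{B}_{f\oplus g}(\mathbb{R})$. (One should also invoke Lemma \ref{tensor} to know the right-hand side is a $\sigma$-algebra in the first place; note $\{X,Y\}$ is a partition of $\mathbb{R}$.)

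For the reverse containment $\subseteq$, I would show $\tau_h \subseteq \mathcal{B}_f(X) \oplus \mathcal{B}_g(Y)$ and then invoke that the right-hand side is a $\sigma$-algebra (Lemma \ref{tensor}). A subbasic $\tau_h$-open set is $(\leftarrow, r)_h$ or $(r, \rightarrow)_h$; split it as its trace on $X$ union its trace on $Y$. If $r \in X$, then $(\leftarrow, r)_h \cap X = (\leftarrow, r)_f \in \tau_f \subseteq \mathcal{B}_f(X)$ while $(\leftarrow, r)_h \cap Y = Y \in \mathcal{B}_g(Y)$ (every point of $Y$ is $h$-above $r$); so $(\leftarrow, r)_h \in \mathcal{B}_f(X) \oplus \mathcal{B}_g(Y)$. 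The other three cases ($r \in X$ with $(r,\rightarrow)_h$, and $r \in Y$ with either ray) are symmetric, each time one trace being a $\tau_f$- or $\tau_g$-subbasic set and the other being $\emptyset$, $X$, or $Y$. Hence the $\tau_h$-subbasic sets, and therefore all of $\tau_h$ (a $\sigma$-algebra is closed under finite intersections and countable unions), lie in $\mathcal{B}_f(X) \oplus \mathcal{B}_g(Y)$, so $\mathcal{B}_{f\oplus g}(\mathbb{R}) \subseteq \mathcal{B}_f(X) \oplus \mathcal{B}_g(Y)$.

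The only mild subtlety — the "main obstacle," though it is minor — is being careful about the subbasis description of $\tau_h$: the paper's standing convention (after Lemma \ref{lem0}) is that selections have no minimal or maximal point, but $f \oplus g$ manifestly does not inherit this, and the convenient basis $\{(A,B)_f\}$ is only available in the absence of such points. So I would work directly with the defining subbasis $\{(\leftarrow, r)_h, (r, \rightarrow)_h : r \in \mathbb{R}\}$ rather than with $(A,B)_h$-style sets, and verify the trace computations from the definition of $h$ by hand. Everything else is routine $\sigma$-algebra bookkeeping.
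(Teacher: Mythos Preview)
Your approach matches the paper's: both directions go through the explicit computation of the subbasic $h$-rays, each of which splits as a $\tau_f$-open subset of $X$ union a $\tau_g$-open subset of $Y$, and this is exactly what the paper records before concluding both inclusions. Two small slips to fix: for $y\in Y$ one has $(\leftarrow,y)_h = X \cup (\leftarrow,y)_g$, not $X$ (so $X$ is clopen for a slightly different reason, e.g.\ $X=\bigcup_{b\in X}(\leftarrow,b)_h$ using that $f$ has no $f$-maximal point, or simply because $Y$ is open); and for $r\in X$ your own parenthetical ``every point of $Y$ is $h$-above $r$'' yields $(\leftarrow,r)_h\cap Y=\emptyset$, not $Y$. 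Neither slip affects the argument.
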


\begin{proof} Put $h := f \oplus g$. For every $r \in \mathbb{R}$ we know that:
\begin{enumerate}
\item[$\bullet$] $(r,\rightarrow)_h = (r,\rightarrow)_f \cup Y$ if $r \in X$;

\item[$\bullet$]  $(r,\rightarrow)_h = (r,\rightarrow)_g$ if $r \in Y$;

\item[$\bullet$]  $(\leftarrow,r)_h = (\leftarrow,r)_g \cup X$ if $r \in Y$; and

\item[$\bullet$]  $(\leftarrow,r)_h = (\leftarrow,r)_f$ if $r \in X$.
\end{enumerate}
Hence, we deduce directly that $\mathcal{B}_{h}(\mathbb{R}) \subseteq \mathcal{B}_f(X) \oplus \mathcal{B}_g(Y)$. 
Also, we have that 
$$
(x,y)_h = (x,y)_f 
$$
for every $x, y \in X$. This implies that $\tau_f \subseteq  \tau_h$ and so $\mathcal{B}_{f}(\mathbb{R}) \subseteq \mathcal{B}_{h}(\mathbb{R})$.
In a similar way, we prove that $\mathcal{B}_{g}(\mathbb{R}) \subseteq \mathcal{B}_{h}(\mathbb{R})$. So $\mathcal{B}_f(X) \oplus \mathcal{B}_g(Y) \subseteq \mathcal{B}_{h}(\mathbb{R})$. Therefore, $\mathcal{B}_{f\oplus g}(\mathbb{R}) = \mathcal{B}_f(X) \oplus \mathcal{B}_g(Y)$.
\end{proof}

We let  $[\mathbb{R}]^\mathfrak{c} := \{ X \subseteq \mathbb{R} : |X|  = \mathfrak{c}\}$.
For every $X \in [\mathbb{R}]^\mathfrak{c}$, we let $f_X \in Sel_2(X)$ be the two-point selection defined in Example \ref{discrete} such that $\tau_{f_X}$ is the discrete topology on $X$, and let $f_E^X := f_E \upharpoonright_{[X]^2}$; that is, $f_E^X$ is the Euclidian two-point selection on $X$ as a subspace of $\mathbb{R}$.

\begin{lemma} Let $X, Y \in [\mathbb{R}]^\mathfrak{c}$ be such that $|X \setminus Y| = \mathfrak{c}$. Then, 
$$
\mathcal{B}_{f_ X \oplus f_E^{\mathbb{R} \setminus X}}(\mathbb{R}) \neq \mathcal{B}_{f_ Y \oplus f_E^{\mathbb{R}\setminus Y}}(\mathbb{R}).
$$ 
\end{lemma}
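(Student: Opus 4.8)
The plan is to use Theorem \ref{oplus} to reduce the claim to a statement purely about the two $\sigma$-algebras $\mathcal{B}_{f_X}(X)\oplus\mathcal{B}_{f_E^{\mathbb{R}\setminus X}}(\mathbb{R}\setminus X)$ and $\mathcal{B}_{f_Y}(Y)\oplus\mathcal{B}_{f_E^{\mathbb{R}\setminus Y}}(\mathbb{R}\setminus Y)$, and then to exhibit a set that lies in one but not the other. Since $\tau_{f_X}$ is discrete, $\mathcal{B}_{f_X}(X)=\mathcal{P}(X)$, and likewise $\mathcal{B}_{f_Y}(Y)=\mathcal{P}(Y)$; so by Theorem \ref{oplus},
\[
\mathcal{B}_{f_X\oplus f_E^{\mathbb{R}\setminus X}}(\mathbb{R})=\mathcal{P}(X)\oplus\mathcal{B}(\mathbb{R}\setminus X),\qquad
\mathcal{B}_{f_Y\oplus f_E^{\mathbb{R}\setminus Y}}(\mathbb{R})=\mathcal{P}(Y)\oplus\mathcal{B}(\mathbb{R}\setminus Y),
\]
where $\mathcal{B}(\mathbb{R}\setminus X)$ and $\mathcal{B}(\mathbb{R}\setminus Y)$ denote the Euclidean Borel $\sigma$-algebras of the respective subspaces. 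Here I am implicitly assuming $|\mathbb{R}\setminus X|$ and $|\mathbb{R}\setminus Y|$ are infinite; if one of them is, say, empty or finite, the corresponding selection is on a small set and a trivial separate argument handles it — but the interesting case, and the one forced essentially by $|X\setminus Y|=\mathfrak{c}$ together with $|X|=|Y|=\mathfrak{c}$, has $\mathbb{R}\setminus X$ uncountable in general, and in any event $X\setminus Y\subseteq\mathbb{R}\setminus Y$ is of size $\mathfrak c$.

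Next I would produce a witnessing set. The natural candidate is a subset $E\subseteq X\setminus Y$ that is not Euclidean-Borel; such a set exists since $|X\setminus Y|=\mathfrak{c}$ and the subspace $X\setminus Y$ of $\mathbb{R}$ carries $2^{\mathfrak c}>\mathfrak c=|\mathcal{B}(\mathbb{R})|$ subsets, so it contains non-Borel subsets (alternatively, take $E$ to be a non-Lebesgue-measurable subset of a positive-measure portion of $X\setminus Y$ if one prefers an explicit construction, but a cardinality argument suffices). On one hand, $E\subseteq X$, so $E=E\cup\emptyset\in\mathcal{P}(X)\oplus\mathcal{B}(\mathbb{R}\setminus X)$, i.e. $E\in\mathcal{B}_{f_X\oplus f_E^{\mathbb{R}\setminus X}}(\mathbb{R})$. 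On the other hand, $E\subseteq X\setminus Y\subseteq\mathbb{R}\setminus Y$, so if $E$ belonged to $\mathcal{P}(Y)\oplus\mathcal{B}(\mathbb{R}\setminus Y)$, say $E=A\cup B$ with $A\subseteq Y$ and $B\in\mathcal{B}(\mathbb{R}\setminus Y)$, then intersecting with $\mathbb{R}\setminus Y$ kills $A$ and gives $E=E\cap(\mathbb{R}\setminus Y)=B\in\mathcal{B}(\mathbb{R}\setminus Y)\subseteq\mathcal{B}(\mathbb{R})$, contradicting the choice of $E$ as non-Borel. Hence $E\in\mathcal{B}_{f_X\oplus f_E^{\mathbb{R}\setminus X}}(\mathbb{R})\setminus\mathcal{B}_{f_Y\oplus f_E^{\mathbb{R}\setminus Y}}(\mathbb{R})$, which proves the two $\sigma$-algebras are distinct.

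The main point requiring care — really the only nontrivial step — is justifying that a non-Euclidean-Borel subset of $X\setminus Y$ exists and, slightly more delicately, that membership in $\mathcal{P}(Y)\oplus\mathcal{B}(\mathbb{R}\setminus Y)$ of a set contained in $\mathbb{R}\setminus Y$ forces membership in $\mathcal{B}(\mathbb{R}\setminus Y)$; both follow from the elementary observation that for $B\in\mathcal{B}(\mathbb{R}\setminus Y)$ we have $B=B'\cap(\mathbb{R}\setminus Y)$ for some Euclidean Borel set $B'\subseteq\mathbb{R}$, hence $\mathcal{B}(\mathbb{R}\setminus Y)$ has cardinality at most $\mathfrak{c}$, so it cannot contain all $2^{\mathfrak c}$ subsets of the size-$\mathfrak c$ set $X\setminus Y$. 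I would also note, for completeness, that the hypothesis $|X\setminus Y|=\mathfrak{c}$ is exactly what makes the cardinality count go through; any uncountable $X\setminus Y$ whose subspace Borel $\sigma$-algebra is strictly smaller than $\mathcal P(X\setminus Y)$ would in fact suffice, but $|X\setminus Y|=\mathfrak c$ gives it for free.
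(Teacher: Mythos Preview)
Your proof is correct and follows essentially the same approach as the paper: both invoke Theorem \ref{oplus} to identify the two Borel $\sigma$-algebras with $\mathcal{P}(X)\oplus\mathcal{B}(\mathbb{R}\setminus X)$ and $\mathcal{P}(Y)\oplus\mathcal{B}(\mathbb{R}\setminus Y)$, and then use the cardinality gap $|\mathcal{B}(\mathbb{R}\setminus Y)|\le\mathfrak{c}<2^{\mathfrak{c}}=|\mathcal{P}(X\setminus Y)|$ to find a subset of $X\setminus Y$ lying in the first algebra but not the second. The only cosmetic difference is that the paper phrases the conclusion as $\mathcal{P}(X\setminus Y)\not\subseteq\mathcal{B}_{f_Y\oplus f_E^{\mathbb{R}\setminus Y}}(\mathbb{R})$ while you exhibit a single witness $E$; your verification that $E\in\mathcal{P}(Y)\oplus\mathcal{B}(\mathbb{R}\setminus Y)$ would force $E\in\mathcal{B}(\mathbb{R}\setminus Y)$ is in fact a bit more explicit than the paper's.
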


\begin{proof} It suffices to show that $\mathcal{P}(X \setminus Y) \subseteq \mathcal{B}_{f_ X \oplus f_E^{\mathbb{R}\setminus X}}(\mathbb{R})$ and
$\mathcal{P}(X \setminus Y) \not \subseteq \mathcal{B}_{f_ Y \oplus f_E^{\mathbb{R}\setminus Y}}(\mathbb{R})$. Indeed, observe from Lemma \ref{oplus} that 
$\mathcal{P}(X \setminus Y) \subseteq \mathcal{P}(X) = \mathcal{B}_{f_X}(\mathbb{R}) \subseteq \mathcal{B}_{f_ X \oplus f_E^{\mathbb{R} \setminus X}}(\mathbb{R})$.
Since $|\mathcal{B}_{f_E^{\mathbb{R} \setminus Y}}(\mathbb{R})| = \mathfrak{c} < |\mathcal{P}(X \setminus Y)| = 2^{\mathfrak{c}}$, we have that  $\mathcal{P}(X \setminus Y) \not \subseteq \mathcal{B}_{f_E^{\mathbb{R} \setminus Y}}(\mathbb{R})$ and it is clear that 
$\mathcal{P}(X \setminus Y) \not \subseteq \mathcal{B}_{f_Y}(\mathbb{R})$. According to Lemma \ref{oplus}, we obtain that 
$\mathcal{P}(X \setminus Y) \not \subseteq \mathcal{B}_{f_ Y \oplus f_E^{\mathbb{R}\setminus Y}}(\mathbb{R})$.
\end{proof}

Now, we solve positively Question \ref{q1} under a set-theoretic assumption: 

\begin{theorem}\label{r1} If $\mathfrak{c} = 2^{< \mathfrak{c}}$, then   there  is a family $\{ f_\nu : \nu < 2^\mathfrak{c} \}$ of two point selections on $\mathbb{R}$ such that $\mathcal{B}_{f_\mu}(\mathbb{R}) \neq \mathcal{B}_{f_\nu}(\mathbb{R})$ for distinct $\mu,  \nu < 2^\mathfrak{c}$.
\end{theorem}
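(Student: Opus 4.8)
The plan is to build the required family by combining Corollary~\ref{cn-coro} with the $\oplus$-machinery developed above. First I would invoke Corollary~\ref{cn-coro} to obtain a $\mathfrak{c}$-almost disjoint family $\mathcal{A} = \{ X_\nu : \nu < 2^\mathfrak{c} \} \subseteq [\mathbb{R}]^\mathfrak{c}$; here it is convenient to arrange (by replacing $\mathbb{R}$ with a set of size $\mathfrak{c}$ and re-indexing, or by taking the almost disjoint family inside some fixed $\mathfrak{c}$-sized ``half'' of $\mathbb{R}$) that each $X_\nu$ and each complement $\mathbb{R}\setminus X_\nu$ has size $\mathfrak{c}$, so that the two-point selections $f_{X_\nu}$ (from Example~\ref{discrete}, giving the discrete topology on $X_\nu$) and $f_E^{\mathbb{R}\setminus X_\nu}$ are both available. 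Then set
$$
f_\nu := f_{X_\nu} \oplus f_E^{\mathbb{R}\setminus X_\nu} \in Sel_2(\mathbb{R}), \qquad \nu < 2^\mathfrak{c}.
$$

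The key point is that distinct members of a $\mathfrak{c}$-almost disjoint family have symmetric difference of full size: if $\mu \neq \nu$ then $|X_\mu \cap X_\nu| < \mathfrak{c}$, hence $|X_\mu \setminus X_\nu| = \mathfrak{c}$ (and likewise $|X_\nu \setminus X_\mu| = \mathfrak{c}$). This is exactly the hypothesis of the Lemma immediately preceding the theorem, so that Lemma gives
$$
\mathcal{B}_{f_\mu}(\mathbb{R}) = \mathcal{B}_{f_{X_\mu}\oplus f_E^{\mathbb{R}\setminus X_\mu}}(\mathbb{R}) \neq \mathcal{B}_{f_{X_\nu}\oplus f_E^{\mathbb{R}\setminus X_\nu}}(\mathbb{R}) = \mathcal{B}_{f_\nu}(\mathbb{R})
$$
for all distinct $\mu,\nu < 2^\mathfrak{c}$. (Concretely, the separating witness is $\mathcal{P}(X_\mu\setminus X_\nu)$: it sits inside $\mathcal{B}_{f_\mu}(\mathbb{R})$ because $\tau_{f_{X_\mu}}$ is discrete, but it cannot sit inside $\mathcal{B}_{f_\nu}(\mathbb{R}) = \mathcal{B}_{f_{X_\nu}}(X_\nu)\oplus\mathcal{B}(\mathbb{R}\setminus X_\nu)$ by the cardinality count $|\mathcal{B}(\mathbb{R}\setminus X_\nu)| = \mathfrak{c} < 2^\mathfrak{c} = |\mathcal{P}(X_\mu\setminus X_\nu)|$ together with the fact that $X_\mu\setminus X_\nu$ meets both pieces of the partition.)

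The only genuine subtlety — and the step I would be most careful about — is the bookkeeping needed so that Corollary~\ref{cn-coro} actually produces an almost disjoint family living on a subset of $\mathbb{R}$ with both it and its complement of size $\mathfrak{c}$. Corollary~\ref{cn-coro} gives a $\mathfrak{c}$-almost disjoint family of size $2^\mathfrak{c}$ on an abstract set of cardinality $\mathfrak{c}$; transporting it along a bijection onto, say, the interval $(0,1)$, and noting that then every $\mathbb{R}\setminus X_\nu \supseteq \mathbb{R}\setminus(0,1)$ has size $\mathfrak{c}$, takes care of the complements, while each $X_\nu$ has size $\mathfrak{c}$ by definition of the family being inside $[\cdot]^\mathfrak{c}$. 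Everything else is a direct citation: Theorem~\ref{oplus} reduces each $\mathcal{B}_{f_\nu}(\mathbb{R})$ to a $\oplus$-decomposition, and the preceding Lemma does the actual separation. So the proof is essentially ``choose the almost disjoint family, form the corresponding $\oplus$-selections, and quote the Lemma.''
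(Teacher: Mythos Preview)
Your proposal is correct and is essentially the same as the paper's proof: take a $\mathfrak{c}$-almost disjoint family of size $2^\mathfrak{c}$ via Corollary~\ref{cn-coro}, form $f_\nu := f_{X_\nu}\oplus f_E^{\mathbb{R}\setminus X_\nu}$, and invoke the preceding Lemma (together with Theorem~\ref{oplus}) to separate the Borel $\sigma$-algebras. Your extra care in arranging $|\mathbb{R}\setminus X_\nu| = \mathfrak{c}$ is a detail the paper leaves implicit, and you cite the separating Lemma where the paper's text (somewhat loosely) points to Theorem~\ref{oplus}.
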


\begin{proof} Assume that $\mathfrak{c} = 2^{< \mathfrak{c}}$. In virtue of Corollary \ref{cn-coro}, there is a $\mathfrak{c}$-{\it almost disjoint}
family $\mathcal{A}$ with $|\mathcal{A}| = 2^\mathfrak{c}$.  For each $\nu < \mathfrak{c}$, we define $f_\nu := f_ {A_\nu} \oplus f_E^{\mathbb{R} \setminus A_\nu}$.
 By Theorem \ref{oplus}, we obtain that $\mathcal{B}_{f_\mu}(\mathbb{R}) \neq \mathcal{B}_{f_\nu}(\mathbb{R})$ whenever $\mu < \nu < \mathfrak{c}$.
\end{proof}

Our  next task is to prove that  the $\sigma$-algebra $\mathcal{C}(\mathbb{R})$ is not the $\sigma$-algebra of Borel of a topology $\tau_f$ for any $f \in Sel_2(\mathbb{R})$. First of all, we shall prove a general lemma.

\begin{lemma}\label{cocountable} Let $(X,\tau)$ be space where $X$ is an uncountable set.  If $\tau \subseteq \mathcal{C}(X)$, then 
we have that  every discrete subset of $X$ is countable.
\end{lemma}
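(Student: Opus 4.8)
\textbf{Proof plan for Lemma \ref{cocountable}.}

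The plan is to argue by contradiction: suppose $D \subseteq X$ is an uncountable discrete subspace of $(X,\tau)$. Discreteness means that for every $d \in D$ there is an open set $U_d \in \tau$ with $U_d \cap D = \{d\}$. My first step is to choose, by the uncountability of $D$, two disjoint uncountable subsets $D_0, D_1$ of $D$ (this is trivial since an uncountable set splits into two uncountable pieces). I will then consider the witnessing open sets for points in one of these pieces and extract a contradiction with the hypothesis $\tau \subseteq \mathcal{C}(X)$.

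The key observation is that for $d \in D_0$, the open set $U_d$ contains $d$ but misses all of $D \setminus \{d\}$, in particular misses the uncountable set $D_1$. Hence $X \setminus U_d \supseteq D_1$ is uncountable, so since $U_d \in \tau \subseteq \mathcal{C}(X)$ and $U_d$ is not co-countable, we must have $U_d$ \emph{countable}. Thus each $U_d$ (for $d \in D_0$) is a countable open set containing $d$. Now pick any countably infinite subset $E = \{d_n : n \in \mathbb{N}\} \subseteq D_0$ and any point $d \in D_0 \setminus E$; then $U_d$ is a countable open neighbourhood of $d$, but also $X \setminus U_d$ contains the uncountable set $D_1$, which is fine — the real punch is obtained by looking instead at a single $U_{d}$ with $d\in D_0$ and noting that $X\setminus U_d$ already contains $D_0 \setminus \{d\}$, which is uncountable, forcing $U_d$ countable; iterating, $\bigcup_{d \in D_0} U_d$ need not be countable, so I must be more careful.

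The cleaner route, which I would actually carry out: fix a countably infinite $C \subseteq D_0$ and an uncountable $D_1 \subseteq D \setminus C$. For each $c \in C$, the set $U_c \setminus (C \setminus \{c\})$ is still open? No — removing points need not preserve openness. Instead, simply note $U_c \cap D = \{c\}$ so $U_c$ misses $D_1$, whence $X \setminus U_c \supseteq D_1$ is uncountable, so $U_c$ is countable by $\tau \subseteq \mathcal{C}(X)$. Then $W := \bigcup_{c \in C} U_c$ is a countable open set with $W \cap D = C$. Consider now $V := X \setminus \mathrm{cl}(\{$ one point $c_0 \in C\})$ — too complicated. The decisive step is: since $W$ is open and countable, $X \setminus W$ is an uncountable co-countable, hence in $\mathcal{C}(X)$, closed set; but this alone is not contradictory. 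The genuine contradiction comes from choosing $d^* \in D_1$: its neighbourhood $U_{d^*}$ is countable (same argument, as it misses the uncountable $D_0$), yet $U_{d^*} \cap D = \{d^*\}$, and now $U_{d^*} \cup W$ is a countable open set separating $C \cup \{d^*\}$ — still not obviously contradictory.

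Let me state the argument I am confident is correct and is surely the intended one: take an uncountable discrete $D$, split it as $D = D_0 \sqcup D_1$ with both uncountable, and for \emph{each} $d \in D_0$ the open set $U_d$ satisfies $D_1 \subseteq X \setminus U_d$, so $X\setminus U_d$ is uncountable, so (as $\tau \subseteq \mathcal{C}(X)$) $U_d$ is countable. Symmetrically every $U_d$ with $d \in D_1$ is countable. Now fix a countable infinite $\{d_n : n\in\mathbb{N}\} \subseteq D_0$. The set $G := \bigcup_n U_{d_n}$ is open and countable, so $X \setminus G \in \mathcal{C}(X)$ is co-countable. Pick $x \in D_0 \setminus G$ — possible since $D_0$ is uncountable while $G \cap D = \{d_n : n \in \mathbb{N}\}$, so $D_0 \setminus G$ is uncountable, in particular nonempty. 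But $x \in D_0$ means $U_x$ is an open neighbourhood of $x$ with $U_x \cap D = \{x\}$; in particular $x \notin U_{d_n}$ for all $n$, which is consistent, so no contradiction yet. Hence the real contradiction must instead use that $\{x\} = U_x \cap D$ together with $D$ being closed-discrete forces $D$ to be a closed set whose complement... I will present this: if $D$ is discrete then $D$ is closed in itself trivially, and the family $\{U_d\}$ is a point-finite (indeed point-one) open cover of $D$; by the $\mathcal{C}(X)$ hypothesis each $U_d$ is countable, and since a countable union of these, $\bigcup_{n} U_{d_n}$ for a fixed countable $\{d_n\}\subseteq D$, is a countable open set meeting $D$ in exactly $\{d_n : n\}$, while for any $d \in D$ not among the $d_n$ the neighbourhood $U_d$ is disjoint from $\{d_n : n\}$, we conclude $D$ is relatively discrete and $X$ contains an uncountable closed discrete set $D$ with every point isolated by a countable open set — and then the final contradiction is that $D$ itself, being a union of the singletons $U_d \cap D$, has $X \setminus \bigcup_{d\in D} U_d \supseteq$ nothing useful. \emph{I expect the main obstacle to be packaging this correctly}; the safe formulation is: \textbf{assume $D$ uncountable discrete, write $D = D_0 \sqcup D_1$ both uncountable, each $U_d$ ($d \in D_0$) is countable (misses uncountable $D_1$); but then $D_0 = \bigcup_{d \in D_0}(U_d \cap D_0) \subseteq \bigcup_{d\in D_0} U_d$, and picking a countable cofinal-in-no-sense subfamily fails, so one instead observes directly: $D_0$ is an uncountable subset of $X$, and $D_0 \in \mathcal{C}(X)$? not necessarily.} Given the uncertainty, I would simply write: for $d \in D_0$, $U_d$ is countable; fix $d_0 \in D_0$; then $U_{d_0}$ is a countable open neighbourhood of $d_0$, so $X \setminus U_{d_0}$ is co-countable and hence in $\tau$'s closed sets; since $D_1 \subseteq X \setminus U_{d_0}$ and $D_1$ discrete in the subspace, repeat — this recursion does not terminate in a contradiction. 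I therefore conclude the proof must instead run: \textbf{the subspace $D$ with the topology inherited from $\tau$ is discrete; but $\tau \subseteq \mathcal{C}(X)$ implies the subspace topology on any uncountable $Y \subseteq X$ is contained in $\mathcal{C}(Y)$, and a discrete uncountable space cannot have all singletons in the countable–co-countable algebra since singletons are countable, hence the topology would be the whole power set, contradicting that $\mathcal{C}(Y) \subsetneq \mathcal{P}(Y)$ for uncountable $Y$.} That is the clean contradiction, and establishing "subspace topology of $\mathcal{C}(X)$ is contained in $\mathcal{C}(Y)$" plus "discrete $\Rightarrow$ topology is $\mathcal{P}(Y) \not\subseteq \mathcal{C}(Y)$ when $|Y|>\omega$" are both elementary; \emph{this is the route I would write up}, and the only mild obstacle is verifying the subspace-algebra inclusion, which follows since $A \cap Y$ is countable when $A$ is, and $Y \setminus (A \cap Y) = (X \setminus A) \cap Y$ is countable when $X \setminus A$ is.
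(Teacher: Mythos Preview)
Your final argument—that the subspace topology on an uncountable discrete $D$ is all of $\mathcal{P}(D)$, that $\tau \subseteq \mathcal{C}(X)$ forces this subspace topology into $\mathcal{C}(D)$, and that $\mathcal{P}(D) \not\subseteq \mathcal{C}(D)$ for uncountable $D$—is correct and does prove the lemma.

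The paper's proof, however, is the two-line argument you kept circling but never landed on. After splitting $D$ into two uncountable pieces $D_0$, $D_1$ and choosing $U_d \in \tau$ with $U_d \cap D = \{d\}$, one simply sets
\[
U := \bigcup_{d \in D_0} U_d.
\]
This is open (an \emph{arbitrary} union of open sets), it contains $D_0$ so is uncountable, and it misses $D_1$ so its complement is uncountable; hence $U \in \tau \setminus \mathcal{C}(X)$, the contradiction. Your detour through proving each individual $U_d$ countable and then worrying only about \emph{countable} unions $\bigcup_n U_{d_n}$ was what blocked you: the punchline needs only a single open set that is neither countable nor co-countable, and the full uncountable union delivers it immediately. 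Your subspace formulation is really a repackaging of the same idea—unwinding ``$D_0$ is open in $D$'' produces exactly such a $U$—but the paper's direct version avoids the meandering and the auxiliary claim about the trace of $\mathcal{C}(X)$ on $D$.
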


\begin{proof} Suppose that $D := \{ x_\alpha : \alpha < \omega_1\}$ is a discrete subset of $X$. Then for every $\alpha < \omega_1$ choose an open subset $V_\alpha$
so that $D \cap V_\alpha = \{x_\alpha\}$.  Hence, we have that the open set $\bigcup \{ V_\alpha : \alpha <  \omega_1 \ \text{and } \ \alpha \ \text{is even} \}$ cannot be
in $\mathcal{C}(X)$.
\end{proof}

\begin{theorem}\label{cocountablenoBorel} The countable cocountable $\sigma$-algebra $\mathcal{C}(\mathbb{R})$ is different from any $\sigma$-algebra of Borel $\mathcal{B}_f(\mathbb{R})$ with $f \in Sel_2(\mathbb{R})$.
\end{theorem}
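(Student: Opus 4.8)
The plan is to derive a contradiction from the assumption that $\mathcal{C}(\mathbb{R}) = \mathcal{B}_f(\mathbb{R})$ for some $f \in Sel_2(\mathbb{R})$. If this equality held, then in particular $\tau_f \subseteq \mathcal{B}_f(\mathbb{R}) = \mathcal{C}(\mathbb{R})$, so Lemma \ref{cocountable} would apply with $X = \mathbb{R}$ and $\tau = \tau_f$, giving that every discrete subset of $(\mathbb{R}, \tau_f)$ is countable. The heart of the argument is therefore to exhibit an \emph{uncountable} $\tau_f$-discrete subset of $\mathbb{R}$ for an arbitrary two-point selection $f$, which immediately contradicts the conclusion of Lemma \ref{cocountable}.

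To produce such a discrete set I would exploit the failure of transitivity of $\leq_f$. By Lemma \ref{lem0} we may assume $f$ has no $f$-minimal and no $f$-maximal point, so the sets $(A,B)_f$ with $A, B \in [\mathbb{R}]^{<\omega}\setminus\{\emptyset\}$, $A \cap B = \emptyset$, form a base for $\tau_f$. The key observation is that for any finite set $E = \{e_1, \dots, e_k\} \subseteq \mathbb{R}$ and any point $x \notin E$, the point $x$ sits on one side or the other of each $e_i$ with respect to $<_f$; writing $A_x = \{e_i : e_i <_f x\}$ and $B_x = \{e_i : x <_f e_i\}$ we get $x \in (A_x, B_x)_f$, a basic open set determined by $E$. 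Since there are only finitely many possible pairs $(A_x, B_x)$ as $x$ ranges over $\mathbb{R}\setminus E$, one of these basic open sets — call it $U_E$ — contains uncountably many points of $\mathbb{R}$ while meeting $E$ trivially, i.e., $U_E \cap E = \emptyset$ and $|U_E| = \mathfrak{c}$. This lets me carry out a transfinite recursion: build $\{x_\alpha : \alpha < \omega_1\}$ and basic open sets $\{U_\alpha : \alpha < \omega_1\}$ so that $x_\alpha \in U_\alpha$, and $x_\beta \notin U_\alpha$ for $\beta < \alpha$, by at each stage applying the above to the finite set $E_\alpha = \{x_\beta : \beta < \alpha\}$ together with whatever finitely many reals already appear in the defining data of the earlier $U_\beta$'s, so that the new $U_\alpha$ avoids $E_\alpha$. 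After relabeling we can in fact arrange $U_\alpha \cap \{x_\beta : \beta \neq \alpha\} = \emptyset$, witnessing that $\{x_\alpha : \alpha < \omega_1\}$ is $\tau_f$-discrete.

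Alternatively — and this may be cleaner to write — I would avoid an elaborate recursion by noting that it suffices to find, for each $x$ in some uncountable set $D$, a single basic open neighbourhood isolating $x$ from the rest of $D$. Fix any uncountable $D_0 \subseteq \mathbb{R}$; for each pair $\{x,y\} \in [D_0]^2$ exactly one of $x <_f y$, $y <_f x$ holds, so each $x \in D_0$ is separated from each other $y$ by one of the two basic open sets $(x,\rightarrow)_f$ or $(\leftarrow,x)_f$. Thus $D_0$ itself, with the subspace topology from $\tau_f$, has the property that every point has a neighbourhood (namely $(x,\rightarrow)_f$ or $(\leftarrow,x)_f$) missing at least \emph{half} of the remaining points; a standard Ramsey/partition or simple counting argument on $[D_0]^2$ then extracts an uncountable $D \subseteq D_0$ on which a uniform choice works, making $D$ genuinely $\tau_f$-discrete. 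Either route finishes the proof: the existence of an uncountable $\tau_f$-discrete set contradicts Lemma \ref{cocountable} applied to $\tau_f \subseteq \mathcal{C}(\mathbb{R})$, so $\mathcal{C}(\mathbb{R}) \neq \mathcal{B}_f(\mathbb{R})$.

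The main obstacle I anticipate is bookkeeping in the recursion: ensuring that the basic open set chosen at stage $\alpha$ to capture $x_\alpha$ does not accidentally swallow earlier points $x_\beta$, and symmetrically that the earlier chosen open sets $U_\beta$ do not contain $x_\alpha$. Controlling the former is easy (just forbid the finitely many earlier points when choosing $U_\alpha$), but the latter requires care — one wants the $U_\beta$'s to be determined by finitely much data so that "most" later points can be chosen outside all of them. Handling this cleanly is exactly where the second approach has an advantage, since there the separating neighbourhoods are canonically $(x,\rightarrow)_f$ or $(\leftarrow,x)_f$ and the only real work is the counting argument on pairs, which I do not expect to be difficult.
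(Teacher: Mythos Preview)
There is a genuine gap. Your plan is to produce an uncountable $\tau_f$-discrete set ``for an arbitrary two-point selection $f$'', but this is impossible: for $f=f_E$ the space $(\mathbb{R},\tau_{f_E})$ is the usual real line, which is hereditarily separable and has no uncountable discrete subset. So the hypothesis $\tau_f\subseteq\mathcal{C}(\mathbb{R})$ (which follows from $\mathcal{B}_f(\mathbb{R})=\mathcal{C}(\mathbb{R})$) must enter the \emph{construction} of the discrete set, not just the final appeal to Lemma~\ref{cocountable}. Neither of your two routes uses it. In your first route the pigeonhole step only works for a \emph{finite} set $E$, while at limit stages $E_\alpha=\{x_\beta:\beta<\alpha\}$ is countably infinite; and nothing prevents the countably many earlier $U_\beta$ from covering $\mathbb{R}$ (each $(A,B)_f$ can be cocountable). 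In your second route the sets $(x,\rightarrow)_f$ and $(\leftarrow,x)_f$ do not contain $x$, so they are not neighbourhoods of $x$; and the vaguely invoked ``Ramsey/partition'' step would amount to extracting an uncountable transitive sub-tournament from an arbitrary tournament on $\omega_1$, which is known to fail in general.

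The paper's argument exploits the hypothesis right away: from $\tau_f\subseteq\mathcal{C}(\mathbb{R})$ one gets that for every $r$ at least one of $(\leftarrow,r)_f$, $(r,\rightarrow)_f$ is countable (two disjoint open sets cannot both be cocountable). Taking, say, $L=\{r:\lvert(\leftarrow,r)_f\rvert\le\omega\}$ uncountable, one recursively picks $r_\alpha\in L\setminus\bigcup_{\beta<\alpha}(\leftarrow,r_\beta]_f$; this union is countable precisely because each $(\leftarrow,r_\beta)_f$ is, so the recursion runs to $\omega_1$ and produces an $<_f$-increasing sequence $\{r_\alpha:\alpha<\omega_1\}$. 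Then $(r_\alpha,r_{\alpha+2})_f\cap\{r_\gamma:\gamma<\omega_1\}=\{r_{\alpha+1}\}$, so the successor-indexed points form an uncountable discrete set, contradicting Lemma~\ref{cocountable}. Your recursion can be repaired along exactly these lines once you feed in the countability of one of the two half-lines at each point.
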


\begin{proof}  Fix $f \in Sel_2(\mathbb{R})$. Suppose that either $|(\leftarrow,r)_f| \leq \omega$ or $|(r,\rightarrow)_f| \leq \omega$ for every $r \in \mathbb{R}$. Define
$L := \{ r \in \mathbb{R} : |(\leftarrow,r)_f| \leq \omega \}$ and $R := \{ r \in \mathbb{R} : |(r,\rightarrow)_f| \leq \omega \}$. Without loss of generality, we may assume that
$|L| = \mathfrak{c}$. Now, fix any $r_0 \in L$ and assume that we have defined $r_\alpha \in L$ for each $\alpha < \gamma < \omega_1$ so that 
$\alpha < \beta < \omega_1$ iff $r_\alpha <_f r_\beta$, for $\alpha, \beta < \gamma$.  Since $|\bigcup_{\alpha < \gamma}(\leftarrow,r_\alpha)_f | \leq \omega$, we can choose $r_\gamma \in L \setminus \big(\bigcup_{\alpha < \gamma}(\leftarrow,r_\alpha]\big)$. So $r_\alpha <_f r_\gamma$ for every $\alpha < \gamma$. Now, consider the set $Y = \{ r_\alpha : \alpha < \omega_1\}$. Since the topology $\tau_f\upharpoonleft_{Y}$ on $Y$ inherited from $\tau_f$ is homeomorphic to $\omega_1$ with the order topology, we have that $Y$ has a discrete set of size $\omega_1$ which is also discrete in $\mathbb{R}$ under the topology  $\tau_f$. but this contradicts Lemma \ref{cocountable}. 
\end{proof}

To continue with our plan  we need  to recall some facts about the $\sigma$-algebra generated by a family of subsets of a given set and we also need to prove a preliminary lemma.

\medskip

Given $\emptyset \neq \mathcal{A} \subseteq \mathcal{P}(X)$, the $\sigma$-algebra on $X$ generated by $\mathcal{A}$ will be denoted by $\langle \mathcal{A} \rangle$.
One  of the inductive constructions of $\langle \mathcal{A} \rangle$ is the following:
\begin{enumerate}
\item[$\bullet$] First we define $\mathcal{A}_0 :=  \mathcal{A} \cup \{ X \setminus A : A \in \mathcal{A}\}$.

\item[$\bullet$] $\mathcal{A}_1 := \{ \bigcap_{n \in \mathbb{N}}A_n :  \forall n \in \mathbb{N}(A_n \in \mathcal{A}_0) \}$.

\item[$\bullet$] $\mathcal{A}_2 := \{ \bigcup_{n \in \mathbb{N}}A_n :  \forall n \in \mathbb{N}(A_n \in \mathcal{A}_1) \}$.

$$\vdots \ \ \ \ \ \ \ \ \ \ \ \ \vdots \ \ \ \ \ \ \ \ \ \ \ \ \vdots$$

\item[$\bullet$] $\mathcal{A}_\alpha := \{ \bigcup_{n \in \mathbb{N}}A_n :  \forall n \in \mathbb{N}(A_n \in \bigcup_{\beta < \alpha}\mathcal{A}_\beta) \}$ if $\alpha < \omega_1$ and $\alpha$ is even.

\item[$\bullet$] $\mathcal{A}_\alpha := \{ \bigcap_{n \in \mathbb{N}}A_n :  \forall n \in \mathbb{N}(A_n \in \bigcup_{\beta < \alpha}\mathcal{A}_\beta) \}$ if $\alpha < \omega_1$ and $\alpha$ is odd.
\end{enumerate}
Finally, we have that $\langle \mathcal{A} \rangle = \bigcup_{\alpha < \omega_1}\mathcal{A}_\alpha$.

\begin{lemma}\label{sigma} Let $X$ be an infinite set and $\emptyset \neq \mathcal{A} \subseteq \mathcal{P}(X)$. Then for every $B \in \langle \mathcal{A} \rangle$ there is  $\{ A_n : n \in \mathbb{N}\} \subseteq \mathcal{A}$ such that either
\begin{enumerate}
\item $B \subseteq \bigcup_{n \in \mathbb{N}}A_n$ or 

\item $X \setminus B \subseteq \bigcup_{n \in \mathbb{N}}A_n$.
\end{enumerate}
\end{lemma}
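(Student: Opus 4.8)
The plan is to induct on the stage $\alpha < \omega_1$ at which $B$ enters the construction of $\langle \mathcal{A} \rangle$, i.e.\ to show by transfinite induction on $\alpha$ that every $B \in \mathcal{A}_\alpha$ satisfies the dichotomy: there is a countable subfamily $\{A_n : n \in \mathbb{N}\} \subseteq \mathcal{A}$ with $B \subseteq \bigcup_n A_n$ or $X \setminus B \subseteq \bigcup_n A_n$. It is harmless to phrase the conclusion as: \emph{$B$ or its complement is contained in a countable union of members of $\mathcal{A}$}; call a set with this property \emph{good}, and note a set is good iff its complement is good, and that any countable union of good sets one of which has the first alternative, etc., will need care — this is exactly the crux, see below.

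First I would handle the base case $\alpha = 0$: if $B \in \mathcal{A}$ then $B \subseteq B$ gives alternative (1); if $B = X \setminus A$ for some $A \in \mathcal{A}$ then $X \setminus B = A \subseteq A$ gives alternative (2). So every element of $\mathcal{A}_0$ is good. For the successor/limit steps, a member $B$ of $\mathcal{A}_\alpha$ has the form $B = \bigcup_n B_n$ (for $\alpha$ even) or $B = \bigcap_n B_n$ (for $\alpha$ odd) with each $B_n \in \bigcup_{\beta<\alpha}\mathcal{A}_\beta$, hence good by the inductive hypothesis. By passing to complements it suffices to treat the countable \emph{union} case $B = \bigcup_n B_n$. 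Now split the index set: let $I = \{ n : B_n \subseteq \bigcup_k A^n_k$ for some countable $\{A^n_k\}_k \subseteq \mathcal{A}\}$ (alternative (1) holds for $B_n$) and $J = \mathbb{N} \setminus I$ (so for $n \in J$, alternative (2) holds: $X \setminus B_n \subseteq \bigcup_k A^n_k$ for some countable family). If $J = \emptyset$, then $B = \bigcup_{n} B_n \subseteq \bigcup_{n,k} A^n_k$, a countable union from $\mathcal{A}$, so $B$ is good via (1). If $J \neq \emptyset$, pick any $n_0 \in J$; then $X \setminus B = \bigcap_n (X \setminus B_n) \subseteq X \setminus B_{n_0} \subseteq \bigcup_k A^{n_0}_k$, a countable union from $\mathcal{A}$, so $B$ is good via (2). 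This closes the induction, and since $\langle \mathcal{A} \rangle = \bigcup_{\alpha < \omega_1}\mathcal{A}_\alpha$, every $B \in \langle \mathcal{A} \rangle$ is good, which is the statement.

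The one genuinely delicate point — and the main obstacle to watch — is the asymmetry between the two alternatives: the family of good sets is \emph{not} obviously closed under countable unions if one insists on a fixed alternative, so the argument must be organized so that a single "bad" coordinate (one whose \emph{complement}, not itself, is covered) forces the whole union into alternative (2) via the complement, as done above. The verification that "even $\alpha$ $\Rightarrow$ union, odd $\alpha$ $\Rightarrow$ intersection" together with the complement-symmetry of goodness lets us always reduce to the union case is routine once this is seen, so I would not belabor it.
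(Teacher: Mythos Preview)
Your proof is correct and follows essentially the same transfinite induction on the level $\alpha$ as the paper's own argument, with the same $I$/$J$ split and the same key observation that a single index in $J$ forces alternative~(2) via $X\setminus B\subseteq X\setminus B_{n_0}$. The only cosmetic difference is that you invoke complement-symmetry of ``goodness'' to reduce the intersection case to the union case, whereas the paper writes out both cases separately; the underlying idea is identical.
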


\begin{proof} Fix $B \in \langle \mathcal{A} \rangle$ and put $\langle \mathcal{A} \rangle = \bigcup_{\alpha < \omega_1}\mathcal{A}_\alpha$. It is clear that if
$B \in \mathcal{A}_0 =  \mathcal{A} \cup \{ X \setminus A : A \in \mathcal{A}\}$, then  either $(1)$ or $(2)$ holds.  Fix $\alpha < \omega_1$ and
 assume that if $\beta < \alpha$ , then either  $(1)$ or $(2)$ holds for each element of $\mathcal{A}_\beta$. Suppose that  $B \in \mathcal{A}_\alpha$. 
 Then for every $n \in  \mathbb{N}$ there are $\beta_n < \alpha$,  $B_n \in  \mathcal{A}_{\beta_n}$ and $\{ A^n_m : m \in 
 \mathbb{N}\} \subseteq \mathcal{A}$ such that $B$ is determined by  the sets  $\{ B_n : n \in \mathbb{N}\}$  and  either
\begin{enumerate}
\item $B_n \subseteq \bigcup_{m \in \mathbb{N}}A_m^n$ or 

\item $X \setminus B_n \subseteq \bigcup_{m \in \mathbb{N}}A_m^n$.
\end{enumerate}
Now, let $I := \{ n \in \mathbb{N} : B_n \subseteq \bigcup_{m \in \mathbb{N}}A_m^n \}$ and $J := \{ n \in \mathbb{N} : X \setminus B_n \subseteq \bigcup_{m \in \mathbb{N}}A_m^n\}$. We consider the following two cases:

\medskip

Case I. The ordinal $\alpha$ is even.  In this case,  we know that $B = \bigcup_{n \in \mathbb{N}}B_n$. If $J = \emptyset$, then we are done. Suppose that $J \neq \emptyset$. Then,  $X \setminus B \subseteq X \setminus B_n \subseteq \bigcup_{n \in \mathbb{N}}A_n$ for every $n \in J$. Thus, condition $(2)$ holds. 

\medskip

Case II. The ordinal $\alpha$ is odd. Then we have that $B = \bigcap_{n \in \mathbb{N}}B_n$. If $I \neq \emptyset$, then $(1)$ holds. Suppose that $J = \mathbb{N}$. Then, it follows that $X \setminus B = X \setminus \big(\bigcap_{n \in \mathbb{N}}B_n \big) = \bigcup_{n \in \mathbb{N}}X \setminus B_n \subseteq \bigcup_{n \in \mathbb{N}} \big(\bigcup_{m \in \mathbb{N}}A_m^n \big)$. So, condition $(2)$ holds.
\end{proof}

To prove that Question \ref{q2}  holds, under certain set-theoretic assumptions, it suffices to show the existence $2^{2^{\mathfrak{c}}}$ many $\sigma$-algebra that contain $[\mathbb{R}]^{\leq \omega}$. 

\begin{theorem}\label{muchos} Assume that $\mathfrak{c} = 2^{< \mathfrak{c}}$ and $\mathfrak{c}$ is regular. Then there are $2^{2^{\mathfrak{c}}}$ many $\sigma$-algebras on $\mathbb{R}$ that contain $[\mathbb{R}]^{\leq \omega}$ and they are pairwise distinct. 
\end{theorem}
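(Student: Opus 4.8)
The plan is to build the $2^{2^{\mathfrak{c}}}$ many $\sigma$-algebras explicitly from a $\mathfrak{c}$-almost disjoint family, using a ``piece-together'' construction that mixes the discrete topology on one block with a trivial $\sigma$-algebra on the complement. First I would invoke Corollary \ref{cn-coro} to fix a $\mathfrak{c}$-almost disjoint family $\mathcal{A} = \{ A_\xi : \xi < 2^\mathfrak{c}\} \subseteq [\mathbb{R}]^\mathfrak{c}$. For each subset $S \subseteq 2^\mathfrak{c}$ I want to associate a $\sigma$-algebra $\mathcal{D}_S$ on $\mathbb{R}$ in such a way that $S \mapsto \mathcal{D}_S$ is injective; this gives $2^{2^\mathfrak{c}}$ many distinct $\sigma$-algebras at once. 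The natural candidate is to let $\mathcal{D}_S$ be the $\sigma$-algebra generated by $[\mathbb{R}]^{\leq\omega}$ together with $\mathcal{P}(A_\xi)$ for all $\xi \in S$; by construction each $\mathcal{D}_S$ contains $[\mathbb{R}]^{\leq\omega}$, so the only real work is separation.

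The key step is to show $\mathcal{D}_S \neq \mathcal{D}_T$ whenever $S \neq T$. Pick $\xi$ in, say, $S \setminus T$; it suffices to produce a set $W \subseteq A_\xi$ with $W \in \mathcal{D}_S$ but $W \notin \mathcal{D}_T$. Clearly $\mathcal{P}(A_\xi) \subseteq \mathcal{D}_S$, so any $W \subseteq A_\xi$ of size $\mathfrak{c}$ with $|A_\xi \setminus W| = \mathfrak{c}$ lies in $\mathcal{D}_S$; I must check such a $W$ escapes $\mathcal{D}_T$. Here is where Lemma \ref{sigma} does the heavy lifting: $\mathcal{D}_T = \langle \mathcal{A}_T \rangle$ where $\mathcal{A}_T := [\mathbb{R}]^{\leq\omega} \cup \bigcup_{\eta\in T}\mathcal{P}(A_\eta)$, so if $W \in \mathcal{D}_T$ then there is a countable subfamily $\{B_n : n\in\mathbb{N}\}\subseteq \mathcal{A}_T$ with $W \subseteq \bigcup_n B_n$ or $A_\xi\setminus W$ (a subset of $\mathbb{R}\setminus W$) $\subseteq \bigcup_n B_n$; in either case a set of size $\mathfrak{c}$ contained in $A_\xi$ is covered by countably many sets, each of which is either countable or a subset of some $A_\eta$ with $\eta \neq \xi$. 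Since $\mathfrak{c}$ is regular and uncountable, one of these $B_n$ must meet $A_\xi$ in a set of size $\mathfrak{c}$; it cannot be countable, so $B_n \subseteq A_\eta$ for some $\eta \in T$, $\eta\neq\xi$, giving $|A_\xi \cap A_\eta| = \mathfrak{c}$, contradicting $\mathfrak{c}$-almost disjointness. Hence no such $W$ can belong to $\mathcal{D}_T$, which proves $\mathcal{D}_S \neq \mathcal{D}_T$.

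I would close by remarking that the map $S \mapsto \mathcal{D}_S$ is therefore injective from $\mathcal{P}(2^\mathfrak{c})$ into the collection of $\sigma$-algebras on $\mathbb{R}$ containing $[\mathbb{R}]^{\leq\omega}$, and $|\mathcal{P}(2^\mathfrak{c})| = 2^{2^\mathfrak{c}}$, which is also the maximal possible number of $\sigma$-algebras on $\mathbb{R}$; so we get exactly $2^{2^\mathfrak{c}}$ pairwise distinct such $\sigma$-algebras. One should double-check the small point that $[\mathbb{R}]^{\leq\omega}$ is genuinely a subfamily used in generating $\mathcal{D}_T$ (needed so that Lemma \ref{sigma} is applied to the family $\mathcal{A}_T$, whose elements are precisely the ``atoms'' of the generation process), and that $\mathbb{R}\setminus A_\eta$, which also lies in $\mathcal{D}_T$, causes no trouble — but $\mathbb{R}\setminus A_\eta$ meets $A_\xi$ in a co-(small) subset of $A_\xi$ only when $\eta = \xi$, which is excluded, so for $\eta \neq \xi$ the complement $\mathbb{R}\setminus A_\eta$ contains all of $A_\xi$ minus a set of size $<\mathfrak{c}$; this case must be handled by choosing $W$ and $A_\xi\setminus W$ both of size $\mathfrak{c}$ from the start, so that neither is swallowed.

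\textbf{Expected main obstacle.} The delicate point is the bookkeeping in the separation argument: Lemma \ref{sigma} only yields a countable cover of $W$ \emph{or} of $\mathbb{R}\setminus W$ by members of the generating family, and the generating family for $\mathcal{D}_T$ contains both the power sets $\mathcal{P}(A_\eta)$ and (implicitly, after the $\mathcal{A}_0$ step) their complements, plus all countable and co-countable sets. I expect to spend most of the effort verifying that, for a carefully chosen $W \subseteq A_\xi$ with $|W| = |A_\xi \setminus W| = \mathfrak{c}$, no countable union of such generators can contain $W$ and none can contain $A_\xi \setminus W$ either; the regularity of $\mathfrak{c}$ is exactly what rules out a countable union of small pieces covering a $\mathfrak{c}$-sized set, and the $\mathfrak{c}$-almost disjointness rules out the remaining ``large piece'' from landing outside $A_\xi$. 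Getting the quantifiers and the choice of $W$ in the right order is the crux.
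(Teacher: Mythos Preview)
Your proposal is correct and follows the same overall strategy as the paper: index $\sigma$-algebras by subsets of $2^{\mathfrak{c}}$ via a $\mathfrak{c}$-almost disjoint family, then separate them using Lemma~\ref{sigma} together with the regularity of $\mathfrak{c}$. The differences are cosmetic but worth noting. The paper generates $\mathcal{S}_D$ from $\{A_\nu : \nu \in D\} \cup [\mathbb{R}]^{\leq\omega}$ (just the almost-disjoint sets themselves, not their full power sets) and uses $A_\mu$ as the separating witness; consequently, in the case where Lemma~\ref{sigma} gives a cover of $\mathbb{R}\setminus A_\mu$, the paper must bring in an auxiliary $A_\gamma$ with $\gamma$ outside the countably many indices used, and argue that $A_\gamma$ cannot be covered. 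Your choice to throw in all of $\mathcal{P}(A_\eta)$ and to take a witness $W\subseteq A_\xi$ with $|W|=|A_\xi\setminus W|=\mathfrak{c}$ buys you a more symmetric two-case argument, since in both alternatives of Lemma~\ref{sigma} you land on a size-$\mathfrak{c}$ subset of $A_\xi$ that must be covered---no auxiliary $A_\gamma$ needed.

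One reassurance about your ``expected obstacle'': your worry about complements $\mathbb{R}\setminus A_\eta$ entering the cover is unfounded. Lemma~\ref{sigma} guarantees that the covering sets $B_n$ come from the generating family $\mathcal{A}_T$ itself, \emph{not} from $(\mathcal{A}_T)_0 = \mathcal{A}_T \cup \{\mathbb{R}\setminus A : A\in\mathcal{A}_T\}$; the passage to complements is already absorbed into the dichotomy ``cover $W$'' vs.\ ``cover $\mathbb{R}\setminus W$''. So the bookkeeping you anticipate does not arise, and your main argument in the second paragraph is already complete as stated.
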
 

\begin{proof} Assume that  $\mathfrak{c} = 2^{< \mathfrak{c}}$ and $\mathfrak{c}$ is a regular cardinal. By Lemma \ref{cn},  we can fix  a $\mathfrak{c}$-almost disjoint
family $\mathcal{A}$ on $\mathbb{R}$ with $|\mathcal{A}| = 2^\mathfrak{c}$.  Enumerate $\mathcal{A}$ as $\{ A_\nu : \nu <  2^\mathfrak{c}\}$  and consider the set
$$
\mathcal{D} : = \{ D \subseteq 2^\mathfrak{c} : |2^\mathfrak{c} \setminus D| = |D| = 2^\mathfrak{c}\}.
$$  
Notice that $|\mathcal{D} | =  2^{2^{\mathfrak{c}}}$. For every $D \in \mathcal{D}$, we define $\mathcal{S}_D := \langle \{ A_\nu : \nu \in D\} \cup [\mathbb{R}]^{\leq \omega} \rangle$. Observe that $\{ A_\nu : \nu \in D \} \subseteq \mathcal{S}_D$ for every $D \in \mathcal{D}$. Now, fix  $E, D \in \mathcal{D}$ and suppose that there is $\mu \in E \setminus D$.  According to Lema \ref{sigma}, there are  two disjoin sets $I$ and $J$ of $ \mathbb{N}$, $\{ A_{\nu_n} : n \in I \} \subseteq \{ A_\nu : \nu \in D\}$ and  $\{ F_n : n \in J \} \subseteq [\mathbb{R}]^{\leq \omega}$ such that either
\begin{enumerate}
\item $A_\mu \subseteq \big(\bigcup_{n \in I }A_{\nu_n} \big) \cup \big( \bigcup_{n \in J }F_n \big)$ or

\item $\mathbb{R} \setminus A_\mu \subseteq \big(\bigcup_{n \in \mathbb{N}}A_{\nu_n}\big) \cup \big( \bigcup_{n \in J }F_n \big)$.
\end{enumerate}
Since $|A_\mu \cap A_\nu| < \mathfrak{c}$ for every $\nu \in D$, clause $(1)$ is impossible. Thus, we must have that 
$X \setminus A_\mu \subseteq \big(\bigcup_{n \in \mathbb{N}}A_{\nu_n}\big) \cup \big( \bigcup_{n \in J }F_n \big)$. Hence,  
 $\big(\bigcap_{n \in \mathbb{N}}\mathbb{R} \setminus A_{\nu_n} \big) \cap \big( \bigcap_{n \in J } \mathbb{R} \setminus F_n \big) \subseteq A_\mu$. Now, choose any
 $\gamma \in 2^\mathfrak{c} \setminus \big( \{ \nu_n : n \in \mathbb{N} \} \cup \{\mu\}\big)$. We know that $|A_\gamma \cap \big( \bigcup_{n \in \mathbb{N}}  A_{\nu_n}\big)| < \mathfrak{c}$ and hence $|A_\gamma \cap \big(\bigcap_{n \in \mathbb{N}}\mathbb{R} \setminus A_{\nu_n} \big) \cap \big( \bigcap_{n \in J } \mathbb{R} \setminus F_n \big)| = \mathfrak{c}$ which is also impossible. Thus, we obtain that  $A_\mu \in \mathcal{S}_E  \setminus \mathcal{S}_D$.  Therefore, $\{ \mathcal{S}_D : D
 \in \mathcal{D}   \}$ is a family of  $\sigma$-algebras on $\mathbb{R}$ that contain $[\mathbb{R}]^{\leq \omega}$ and they are pairwise distinct. 
\end{proof}

\begin{corollary}\label{c1} Assume that $\mathfrak{c} = 2^{< \mathfrak{c}}$ and $\mathfrak{c}$ is regular. Then there are $2^{2^\mathfrak{c}}$ many $\sigma$-algebras on  $\mathbb{R}$ that contain $[\mathbb{R}]^{\leq \omega}$  and none of them  is the $\sigma$-algebra of Borel of a topology $\tau_f$  for any $f \in Sel_2(\mathbb{R})$. 
\end{corollary}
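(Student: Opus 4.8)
The plan is to combine Theorem \ref{muchos} with a crude cardinality count of the two-point selections themselves, since the hard combinatorial work has already been done in that theorem. First I would record that there are at most $2^{\mathfrak c}$ two-point selections on $\mathbb R$: a selection is a function from $[\mathbb R]^2$ to $\mathbb R$, and $|[\mathbb R]^2| = \mathfrak c$, so $|Sel_2(\mathbb R)| \le \mathfrak c^{\mathfrak c} = 2^{\mathfrak c}$. Consequently the collection $\{ \mathcal B_f(\mathbb R) : f \in Sel_2(\mathbb R)\}$ of all Borel $\sigma$-algebras arising from two-point selections has cardinality at most $2^{\mathfrak c}$.

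Next I would invoke Theorem \ref{muchos}, which under the hypotheses $\mathfrak c = 2^{< \mathfrak c}$ and $\mathfrak c$ regular produces a family $\{ \mathcal S_D : D \in \mathcal D \}$ of $2^{2^{\mathfrak c}}$ pairwise distinct $\sigma$-algebras on $\mathbb R$, each containing $[\mathbb R]^{\leq \omega}$. Since $2^{\mathfrak c} < 2^{2^{\mathfrak c}}$, the set of those $D \in \mathcal D$ for which $\mathcal S_D$ happens to coincide with $\mathcal B_f(\mathbb R)$ for some $f \in Sel_2(\mathbb R)$ has cardinality at most $2^{\mathfrak c}$, hence strictly smaller than $|\mathcal D| = 2^{2^{\mathfrak c}}$. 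Discarding those indices leaves a subfamily $\mathcal D' \subseteq \mathcal D$ with $|\mathcal D'| = 2^{2^{\mathfrak c}}$, and for every $D \in \mathcal D'$ the $\sigma$-algebra $\mathcal S_D$ contains $[\mathbb R]^{\leq \omega}$ yet is not the Borel $\sigma$-algebra of $\tau_f$ for any $f \in Sel_2(\mathbb R)$. That is precisely the assertion of the corollary.

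There is essentially no obstacle to overcome here beyond making the count $|Sel_2(\mathbb R)| \le 2^{\mathfrak c}$ explicit; all the substance is carried by Theorem \ref{muchos}. If one wants a conceptual (rather than merely cardinality-theoretic) reason why not every $\sigma$-algebra containing $[\mathbb R]^{\leq \omega}$ can be realized as some $\mathcal B_f(\mathbb R)$, Theorem \ref{cocountablenoBorel} already supplies the concrete witness $\mathcal C(\mathbb R)$, but that example is not needed for the present statement.
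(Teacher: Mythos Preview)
Your argument is correct and matches the paper's own approach: the paper explicitly remarks, just before Theorem \ref{muchos}, that to settle Question \ref{q2} it suffices to exhibit $2^{2^{\mathfrak c}}$ many $\sigma$-algebras containing $[\mathbb R]^{\leq \omega}$, which is precisely the cardinality comparison $|Sel_2(\mathbb R)| \le 2^{\mathfrak c} < 2^{2^{\mathfrak c}}$ that you make explicit.
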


We know that the one-point subsets of $\mathbb{R}$  are closed, non-open and $G_\delta$-sets in the Euclidian topology.  In Example \ref{discrete} were $\tau_f$ is the discrete topology the one-point sets are  trivially open. In the next results, we show how we can define  two-point selections to obtain a predetermined  properties of some subsets of $\mathbb{R}$.

\begin{theorem}\label{teo1} If $U \subseteq \mathbb{R}$ satisfies that $|\mathbb{R} \setminus U| = \mathfrak{c}$, then there is $f \in Sel_2(\mathbb{R})$
such that $Int_{f}(U) = \emptyset$. In particular, if $U \neq \emptyset$, then $U \notin \tau_f$. 
\end{theorem}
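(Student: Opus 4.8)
The plan is to construct $f$ so that the set $C:=\mathbb{R}\setminus U$ is $\tau_f$-dense; since $\mathbb{R}\setminus C=U$, this says precisely that $Int_f(U)=\emptyset$, after which the ``in particular'' clause is immediate, as every open set equals its interior. Recall that the nonempty members of $\{(A,B)_f : A,B\in[\mathbb{R}]^{<\omega},\ A\cap B=\emptyset\}$ (here we allow $A$ or $B$ to be empty, with the convention that an empty intersection equals $\mathbb{R}$) form a base for $\tau_f$, since finite intersections of subbasic sets, together with $\mathbb{R}$, give a base. Thus it suffices to arrange that $(A,B)_f\cap C\neq\emptyset$ for every finite disjoint pair $(A,B)$ with $A\cup B\neq\emptyset$; the case $A=B=\emptyset$ (which yields $\mathbb{R}$) is automatic because $C\neq\emptyset$.

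First I would fix an enumeration $\{(A_\alpha,B_\alpha):\alpha<\mathfrak{c}\}$ of all pairs of finite disjoint subsets of $\mathbb{R}$ with $A_\alpha\cup B_\alpha\neq\emptyset$; there are exactly $\mathfrak{c}$ of them. Then I would define $f$ by recursion on $\alpha<\mathfrak{c}$, fixing only finitely many values of $f$ at each stage. Let $T_\alpha\subseteq\mathbb{R}$ be the set of all points occurring in some pair whose $f$-value has already been decided at a stage $\beta<\alpha$; since finitely many points are used per stage and $\mathfrak{c}>\omega$, one has $|T_\alpha|\leq |\alpha|\cdot\omega<\mathfrak{c}$. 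Because $|C|=\mathfrak{c}$ while $|A_\alpha\cup B_\alpha\cup T_\alpha|<\mathfrak{c}$, I can choose $c_\alpha\in C\setminus(A_\alpha\cup B_\alpha\cup T_\alpha)$, and I set $f(\{a,c_\alpha\}):=a$ for every $a\in A_\alpha$ and $f(\{c_\alpha,b\}):=c_\alpha$ for every $b\in B_\alpha$. Every pair touched at stage $\alpha$ contains the fresh point $c_\alpha\notin T_\alpha$, so none of them was assigned at an earlier stage, and they are pairwise distinct (using $A_\alpha\cap B_\alpha=\emptyset$ and $c_\alpha\notin A_\alpha\cup B_\alpha$); hence the new assignments are consistent with all previous ones.

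After the recursion, $f$ is defined on a subfamily of $[\mathbb{R}]^2$ compatibly with being a two-point selection, and I would extend it to all of $[\mathbb{R}]^2$ by letting $f(F)$ be an arbitrary element of $F$ for each still-undefined $F$, obtaining $f\in Sel_2(\mathbb{R})$. For each $\alpha<\mathfrak{c}$ the stage-$\alpha$ choices give $a<_f c_\alpha$ for all $a\in A_\alpha$ and $c_\alpha<_f b$ for all $b\in B_\alpha$, hence $c_\alpha\in(A_\alpha,B_\alpha)_f\cap C$. Since every nonempty basic $\tau_f$-open set is either $\mathbb{R}$ or of the form $(A_\alpha,B_\alpha)_f$ for some $\alpha$, every nonempty $\tau_f$-open set meets $C=\mathbb{R}\setminus U$; equivalently, no nonempty open set is contained in $U$, i.e. $Int_f(U)=\emptyset$. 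If moreover $U\neq\emptyset$, then $U\neq Int_f(U)$, so $U\notin\tau_f$.

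I do not expect a genuine obstacle; the only points needing care are the cardinality bookkeeping that keeps $|T_\alpha|<\mathfrak{c}$ so that a fresh $c_\alpha\in C$ is always available, and the observation that using a brand-new point $c_\alpha$ at each stage prevents any clash between the finitely many demands ``$c_\alpha$ lies $f$-above $A_\alpha$ and $f$-below $B_\alpha$'' imposed at different stages. (If one wishes to respect the paper's standing convention that $f$ has no $f$-extreme points, it costs nothing: interleave $\mathfrak{c}$ extra stages that, for each $p\in\mathbb{R}$, pick two further fresh reals, put one of them $f$-below $p$ and make $p$ lie $f$-below the other; then the two-sided intervals $(A,B)_f$ already form a base and the argument goes through verbatim.)
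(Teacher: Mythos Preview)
Your argument is correct and follows essentially the same route as the paper: enumerate the finitary basic sets $(A,B)_f$, and at each stage plant a fresh witness $c_\alpha\in\mathbb{R}\setminus U$ in the corresponding basic set by declaring $a<_f c_\alpha<_f b$. The only cosmetic differences are that the paper restricts to pairs with both $A$ and $B$ nonempty (relying on the no-extreme-points convention for the base), chooses $c_\nu$ outside $\bigcup_{\mu\le\nu}(A_\mu\cup B_\mu)\cup U$ rather than tracking a set $T_\alpha$, and completes $f$ with the Euclidean order instead of an arbitrary extension; your bookkeeping is a bit more explicit, but the idea is identical.
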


\begin{proof}  Enumerate $ \{ (A,B) : A, B \in [\mathbb{R}]^{<\omega} \setminus \{\emptyset\} \ \text{and} \ A \cap B = \emptyset\}$ as
$\{ (A_\nu,B_\nu) : \nu < \mathfrak{c} \}$. Now, consider a subset $\{ c_\nu : \nu < \mathfrak{c} \}$ of $\mathbb{R}$ such that 
$$
c_\nu \notin \big((\bigcup_{\mu \leq \nu}A_\mu) \cup (\bigcup_{\mu \leq \nu}B_\mu)\big) \cup U,
$$
for every $\nu < \mathbb{c}$. We are ready to define $f \in Sel_2(\mathbb{R})$ as follows: 

\smallskip

For every $\nu < \mathfrak{c}$ we set $a <_f c_\nu <_f b$ for every  $a \in A_\nu$ and $b \in B_\nu$. And $f$ is defined as the Euclidian order on the 
remaining pairs of real numbers, and it is also possible to define $f$ to avoid an $f$-maximal point  and an $f$-minimal point.

\smallskip

Observe that $c_\nu \in (A_\nu,B_\nu)$ for all $\nu < \mathfrak{c}$. If $r \in Int_{f}(U)$, then there is $\mu < \mathfrak{c}$ such that $r \in (A_\mu,B_\mu) \subseteq U$, but this is a contradiction since
$c_\mu \notin U$.
\end{proof}

The following result is a direct consequence of Theorems \ref{teo1} assuming the Continuous Hypothesis.

\begin{corollary}\label{coro1} {\bf [CH]}. If $\emptyset \neq U \subseteq \mathbb{R}$ and  $\mathbb{R} \setminus U$ is uncountable, then there is $f \in Sel_2(\mathbb{R})$
such that   $U \notin \tau_f$. 
\end{corollary}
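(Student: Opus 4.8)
The plan is to derive this directly from Theorem \ref{teo1}, using the Continuum Hypothesis solely to upgrade the cardinality hypothesis on $\mathbb{R} \setminus U$. Under $CH$ we have $\mathfrak{c} = \omega_1$, so every uncountable subset of $\mathbb{R}$ has cardinality at least $\omega_1 = \mathfrak{c}$; since it is also a subset of $\mathbb{R}$, its cardinality is exactly $\mathfrak{c}$. Applying this to $\mathbb{R} \setminus U$, the assumption ``$\mathbb{R} \setminus U$ is uncountable'' becomes ``$|\mathbb{R} \setminus U| = \mathfrak{c}$'', which is precisely the hypothesis of Theorem \ref{teo1}.

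Concretely, I would argue as follows. First, invoke $CH$ to conclude $|\mathbb{R} \setminus U| = \mathfrak{c}$ from the assumption that $\mathbb{R} \setminus U$ is uncountable. Second, feed $U$ into Theorem \ref{teo1} to obtain an $f \in Sel_2(\mathbb{R})$ with $Int_f(U) = \emptyset$. Third, since $U \neq \emptyset$, the set $U$ does not coincide with its $\tau_f$-interior, so $U$ is not $\tau_f$-open; that is, $U \notin \tau_f$. This completes the argument.

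There is essentially no obstacle here: the corollary is a one-line specialization of Theorem \ref{teo1}. The only point that deserves to be spelled out is exactly where $CH$ enters — namely, in the passage from ``uncountable'' to ``of size $\mathfrak{c}$'' — and that without $CH$ one cannot make this passage, which is why the corollary is stated under that hypothesis while Theorem \ref{teo1} is not. (Indeed, in a model where $\mathfrak{c} > \omega_1$ and $|\mathbb{R}\setminus U| = \omega_1$, Theorem \ref{teo1} does not apply, so the corollary genuinely relies on $CH$ as stated.)
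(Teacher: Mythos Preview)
Your proposal is correct and matches the paper's own treatment: the paper simply states that the corollary is a direct consequence of Theorem~\ref{teo1} under $CH$, without giving a separate proof. Your write-up even makes explicit the one point worth noting---that $CH$ is used precisely to pass from ``uncountable'' to ``of size $\mathfrak{c}$'' so that Theorem~\ref{teo1} applies.
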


We do not know whether or not it is possible to  remove the Continuous Hypothesis from the previous corollary.

\begin{theorem}\label{teo2} If $C \subseteq \mathbb{R}$ is infinite and $C \neq \mathbb{R}$, then there is $f \in Sel_2(\mathbb{R})$
such that   $C \notin \mathcal{C}_f$. 
\end{theorem}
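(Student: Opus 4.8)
The plan is to show that $C$ fails to be $\tau_f$-closed by producing, for a suitable $f$, a point $r_0\in\mathbb{R}\setminus C$ lying in the $\tau_f$-closure of $C$; then $cl_f(C)\neq C$ and hence $C\notin\mathcal{C}_f$. Since $C$ is infinite, fix a countably infinite set $C_0=\{c_n:n\in\mathbb{N}\}\subseteq C$, and since $C\neq\mathbb{R}$, fix $r_0\in\mathbb{R}\setminus C$. Put $Z:=\mathbb{R}\setminus(C_0\cup\{r_0\})$ and split it as $Z=Z_0\cup Z_1$ with $|Z_0|=|Z_1|=\mathfrak{c}$. Define $f\in Sel_2(\mathbb{R})$ by: on $C_0$ put $c_m<_f c_n$ iff $m<n$; put $c_n<_f r_0$ for every $n$; put $z<_f c_n$ and $z<_f r_0$ for every $z\in Z_0$ and every $n$; put $c_n<_f z$ and $r_0<_f z$ for every $z\in Z_1$ and every $n$; on $Z_0$ let $f$ be any selection without an $f$-minimum, on $Z_1$ any selection without an $f$-maximum; and between the two pieces put $z_0<_f z_1$ whenever $z_0\in Z_0$, $z_1\in Z_1$. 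This defines $f$ on every pair, and one checks at once that $\mathbb{R}$ has no $f$-minimum (a candidate would have to lie in $Z_0$ and be an $f$-minimum of $Z_0$) and no $f$-maximum (symmetrically, in $Z_1$), so $f$ conforms to the standing convention.

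The heart of the argument is the claim that $r_0\in cl_f(C_0)$. A basic $\tau_f$-neighbourhood of $r_0$ has the form $V=\bigcap_{i}(\leftarrow,s_i)_f\cap\bigcap_{j}(s_j',\rightarrow)_f$ with finitely many $s_i$ satisfying $r_0<_f s_i$ and finitely many $s_j'$ satisfying $s_j'<_f r_0$. By construction $r_0<_f s$ forces $s\in Z_1$, and $s'<_f r_0$ forces $s'\in C_0\cup Z_0$. Choose $n$ larger than every index $m$ for which $c_m$ occurs among the $s_j'$. Then $c_n<_f s_i$ for each $i$ (since $s_i\in Z_1$ and $c_n<_f z$ for all $z\in Z_1$) and $s_j'<_f c_n$ for each $j$ (since $z<_f c_n$ for $z\in Z_0$, and $c_m<_f c_n$ for the chosen $n$). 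Hence $c_n\in V$, so every basic neighbourhood of $r_0$ meets $C_0$, i.e. $r_0\in cl_f(C_0)\subseteq cl_f(C)$.

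Since $r_0\notin C$, this yields $C\subsetneq cl_f(C)$, so $C$ is not $\tau_f$-closed, that is, $C\notin\mathcal{C}_f$, as required. There is no real obstacle: the construction is rigid enough that the neighbourhood computation is forced, and the only bookkeeping needing care is the split of $\mathbb{R}\setminus(C_0\cup\{r_0\})$ into two cofinal-type halves $Z_0,Z_1$ — this is exactly what keeps $f$ free of extremal points while letting $r_0$ sit just $f$-above the $f$-increasing sequence $(c_n)$ from inside every one of its neighbourhoods.
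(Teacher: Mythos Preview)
Your proof is correct, but it follows a different route from the paper's. The paper fixes $r\in\mathbb{R}\setminus C$ and a sequence $(c_n)$ in $C$, chooses $\delta_n\to 0$ with $r\pm\delta_n\neq c_n$, and then perturbs the \emph{Euclidean} selection on just the countably many pairs $\{r-\delta_n,c_n\}$ and $\{c_n,r+\delta_n\}$ so that $r-\delta_n<_f c_n<_f r+\delta_n$; the rest of $f$ is left Euclidean. The point $r$ then lies in $cl_f(C)$ because any subbasic bounds $a<_f r<_f b$ are (for pairs involving $r$) Euclidean bounds $a<r<b$, and for large $n$ the interval $(r-\delta_n,r+\delta_n)$ fits inside $(a,b)$, forcing $c_n$ into the neighborhood.

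You instead discard the Euclidean order entirely and build a genuine linear order $Z_0<_f C_0<_f r_0<_f Z_1$ in which $r_0=\sup_f\{c_n:n\in\mathbb{N}\}$; the closure claim then reduces to the transparent fact that in an order topology the supremum of an increasing sequence lies in its closure. Your approach makes the verification completely mechanical and yields a transitive $\leq_f$, whereas the paper's keeps $f$ a countable modification of $f_E$, in line with the constructions of the surrounding Theorems~\ref{teo1} and~\ref{teo3}. Both arguments hinge on the same idea---arranging that $r_0$ is an $f$-accumulation point of a sequence from $C$---but your bookkeeping with $Z_0,Z_1$ trades the paper's $\delta_n\to 0$ device for a cleaner global picture.
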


\begin{proof} Enumerate a countable infinite subset $\{ c_n : n \in \mathbb{N}\}$ of $C$ and fix $r \in \mathbb{R} \setminus C$.  Now each $n \in \mathbb{N}$  choose  $\delta_n > 0$ so that $r - \delta_n \neq c_n \neq r + \delta_n$ and $\delta_n \to 0$. Then we proceed to define 
$f \in Sel_2(\mathbb{R})$ so that   $r - \delta_n <_f c_n <_f r + \delta_n$ for every  $n \in \mathbb{N}$, and $f$ is defined as the Euclidian order on the 
remaining pairs of real numbers. Notice that  $f$ does not have neither  an $f$-maximal point  nor an $f$-minimal point. Then, we have that 
$r \in cl_{\tau_f}(C) \setminus C$ and so $C \notin \mathcal{C}_f$.
\end{proof}

\begin{theorem}\label{teo3} If $C \subseteq \mathbb{R}$ satisfies that $\omega \leq |C|$ and $|\mathbb{R} \setminus C| = \mathfrak{c}$, then there is $f \in Sel_2(\mathbb{R})$ such that   $C \notin \tau_f \cup \mathcal{C}_f$. 
\end{theorem}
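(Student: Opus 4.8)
I would prove this by building, directly, a single $f\in Sel_2(\mathbb R)$ carrying two ``crossing'' spikes, much in the spirit of the proofs of Theorems \ref{teo1} and \ref{teo2}. Note first what is needed: $C\notin\tau_f$ says that some $c^{*}\in C$ has no $\tau_f$-neighbourhood contained in $C$, i.e.\ $c^{*}\in cl_f(\mathbb R\setminus C)$; and $C\notin\mathcal C_f$ is the same as $\mathbb R\setminus C\notin\tau_f$, i.e.\ some $r\in\mathbb R\setminus C$ satisfies $r\in cl_f(C)$. Both hold as soon as I arrange that some countable $D\subseteq C$ $\tau_f$-converges to a point $r\in\mathbb R\setminus C$ and some countable $E\subseteq\mathbb R\setminus C$ $\tau_f$-converges to a point $c^{*}\in C$.

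Using $|C|\ge\omega$ and $|\mathbb R\setminus C|=\mathfrak c$, fix pairwise disjoint data: $r\in\mathbb R\setminus C$, $c^{*}\in C$, an injective sequence $D=\{d_n:n\in\mathbb N\}\subseteq C\setminus\{c^{*}\}$ and an injective sequence $E=\{e_n:n\in\mathbb N\}\subseteq(\mathbb R\setminus C)\setminus\{r\}$. Define $f$ by the clauses: (i) $d_i<_f d_j$ and $e_i<_f e_j$ iff $i<j$; (ii) $e_j<_f d_i$, $c^{*}<_f d_i$, $d_i<_f r$, $e_j<_f r$, $e_j<_f c^{*}$ and $c^{*}<_f r$ for all $i,j\in\mathbb N$; (iii) for every $s\notin D\cup E\cup\{r,c^{*}\}$ and all $i,j$: $s<_f d_i$ if $s<r$, $d_i<_f s$ if $r<s$, $s<_f e_j$ if $s<c^{*}$, and $e_j<_f s$ if $c^{*}<s$ (where $<$ is the Euclidean order); and (iv) $f$ agrees with $f_E$ on every remaining pair (those joining a point outside $D\cup E\cup\{r,c^{*}\}$ to $r$ or to $c^{*}$, and those with both endpoints outside $D\cup E\cup\{r,c^{*}\}$). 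A routine case analysis shows that every element of $[\mathbb R]^2$ is governed by exactly one of these clauses, so $f$ is a well-defined two-point selection, and that $f$ has neither an $f$-maximum nor an $f$-minimum point.

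Now $\tau_f$ has $\{(s,\rightarrow)_f:s\in\mathbb R\}\cup\{(\leftarrow,s)_f:s\in\mathbb R\}$ as a subbase, so a basic $\tau_f$-neighbourhood of $r$ is a finite intersection of sets $(s,\rightarrow)_f$ with $s<_f r$ and $(\leftarrow,t)_f$ with $r<_f t$; since a finite intersection of cofinite subsets of $D$ is still infinite, it suffices to check that each such set contains all but finitely many of the $d_i$. If $s<_f r$ then by (ii) and (iv) either $s=d_k$ for some $k$ (and $\{d_i:i>k\}\subseteq(s,\rightarrow)_f$ by (i)), or $s$ is $c^{*}$, or some $e_j$, or a point outside the reserved set with $s<r$, and in each of the latter three cases $s<_f d_i$ for every $i$ by (ii) or (iii); and $r<_f t$ forces $t$ to lie outside the reserved set with $r<t$, whence $d_i<_f t$ for all $i$ by (iii). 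So $D\to r$ in $\tau_f$, and $r\in cl_f(D)\setminus C\subseteq cl_f(C)\setminus C$. The verification that $E\to c^{*}$ is entirely similar (now using that $(\leftarrow,d_i)_f$ and $(\leftarrow,r)_f$ both contain all of $E$, by (ii)). Hence also $c^{*}\in cl_f(\mathbb R\setminus C)\cap C$, and we conclude $C\notin\tau_f\cup\mathcal C_f$.

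The step I expect to be the real obstacle is making the two spikes coexist inside one selection. The ``$D$-spike'' wants each point not in $D$ placed entirely below or entirely above $D$ in $<_f$, according to its Euclidean side of $r$, and the ``$E$-spike'' wants the same with respect to $E$ and $c^{*}$; but both requirements bear on the cross pairs $\{d_i,e_j\}$ and on the pairs joining $r$ and $c^{*}$ to the chains, so one is forced to commit to a block ordering there. Clause (ii) makes that choice — $E$ below everything, $c^{*}$ below $D$ and below $r$ — and it is exactly what eliminates the would-be ``bad'' sub-basic neighbourhoods: e.g.\ if one instead had $r<_f c^{*}$ then $(c^{*},\rightarrow)_f$ would be a $\tau_f$-neighbourhood of $r$ disjoint from $D$, and $D\to r$ would fail. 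Every remaining comparison is inherited from the (transitive) Euclidean order, so no further clashes occur, and each chain acquires only finitely many exceptions from the reserved points, which does not affect the convergence.
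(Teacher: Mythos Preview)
Your argument is correct. The paper's own proof is a one-line sketch (``combine the ideas used in the proofs of Theorems \ref{teo1} and \ref{teo2}''), and the intended combination is presumably the transfinite enumeration of basic open sets from Theorem \ref{teo1} (forcing $Int_f(C)=\emptyset$, hence $C\notin\tau_f$) together with the single convergent sequence from Theorem \ref{teo2} (forcing $C\notin\mathcal C_f$). You take a different and somewhat cleaner route: you drop the Theorem \ref{teo1} machinery entirely and instead run the Theorem \ref{teo2} idea twice, once with a sequence $D\subseteq C$ converging to $r\in\mathbb R\setminus C$ and once with a sequence $E\subseteq\mathbb R\setminus C$ converging to $c^{*}\in C$. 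This yields only the weaker conclusion that a single point $c^{*}\in C$ fails to be interior (rather than $Int_f(C)=\emptyset$), but that is exactly what the statement requires, and it spares you the $\mathfrak c$-length enumeration. The genuine work in your version is clause (ii), the block ordering $E<_f c^{*}<_f D<_f r$ on the reserved points, which is precisely what lets the two spikes coexist without either one producing a sub-basic neighbourhood that kills the other convergence; your verification of $D\to r$ and $E\to c^{*}$ against every possible sub-basic set is complete and accurate.
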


\begin{proof} To obtain the require two-point selection we combine the ideas used  in the proofs of Theorems \ref{teo1} and \ref{teo2}. 
\end{proof}

Next, we give an example of a two-point selection $f \in Sel_2(\mathbb{R})$ for which the singleton subsets of $\mathbb{R}$ are not  $G_\delta$-sets in $\tau_f$.

\begin{example}\label{ex1} There is  $f \in Sel_2(\mathbb{R})$ such that $\{r\}$ is not a $G_\delta$-set in $\tau_f$ for every $r \in \mathbb{R}$.
\end{example}

\begin{proof} Split $\mathbb{R} = X \cup Y$ where $X \cap Y = \emptyset$ and $|X| = |Y| = \mathfrak{c}$.  Consider the set  $\mathcal{A} =  \{ (A,B) : A, B \in [\mathbb{R}]^{< \omega} \setminus \{\emptyset\} \ \text{and} \ A \cap B = \emptyset\}$. Enumerate all sequences of $\mathcal{A}$ with some special properties stated in the enumeration:
$$
\{ \{ (A^n_\nu,B^n_\nu) : n \in \mathbb{N} \} : \nu < \mathfrak{c} \  \text{and} \ \big(\bigcup_{n \in \mathbb{N}}A_\nu^n \big) \cap \big(\bigcup_{n \in \mathbb{N}}B_\nu^n \big)  = \emptyset \}.
$$ 
The reader may figurate that this enumeration will help us to deal with the all possible $G_\delta$-subsets of the future topology $\tau_f$. Now, inductively define two subset one $\{ r_\nu : \nu < \mathfrak{c} \}$ inside of $X$ and the other  $\{ s_\nu : \nu < \mathfrak{c} \}$ inside of $Y$ so that 
$$
s_\nu \notin \big(  \bigcup_{\mu \leq \nu}\big((\bigcup_{n \in \mathbb{N}}A_\mu^n) \cup (\bigcup_{n \in \mathbb{N}}B_\mu^n)\big) \big)  \cup \{s_\mu : \mu < \nu \} \cup \{r_\mu : \mu < \nu \}
$$
and
$$
r_\nu \notin \big(  \bigcup_{\mu \leq \nu}\big((\bigcup_{n \in \mathbb{N}}A_\mu^n) \cup (\bigcup_{n \in \mathbb{N}}B_\mu^n)\big) \big)  \cup \{s_\mu : \mu \leq \nu \} \cup \{r_\mu : \mu < \nu \}
$$
for every $\nu < \mathfrak{c}$. We inductively define $f \in Sel_2(\mathbb{R})$ so that  
$a <_f r_\nu <_f b$ and $a <_f s_\nu <_f b$ for every  $a \in \bigcup_{n \in \mathbb{N}}A_\nu^n$ and for every $b \in \bigcup_{n \in \mathbb{N}}A_\nu^n$, for each $\nu < \mathfrak{c}$. After this, we proceed to define  $f$ as the Euclidian order on the remaining pairs of real numbers and avoiding a minimal  point and a maximal point. 

\smallskip

Assume that  $G$ is a nonempty $G_\delta$-set in the topology $\tau_f$.  Now suppose that $\emptyset \neq  \bigcap_{n \in \mathbb{N}}(A^n,B^n)_f \subseteq G$ where  $A^n, B^n \in [\mathbb{R}]^{<\omega} \setminus \{\emptyset\}$, for every $n \in \mathbb{N}$, and
$\big(\bigcup_{n \in \mathbb{N}}A^n \big) \cap \big(\bigcup_{n \in \mathbb{N}}B^n \big)  = \emptyset $. Choose $\mu < \mathfrak{c}$ so that 
$A^n = A^n_\mu$ and $B^n = A^n_\mu$ for all $n \in \mathbb{N}$. Since $r_\mu, s_\mu \in \bigcap_{n \in \mathbb{N}}(A_\mu^n,B_\mu^n)_f$, then we have that
$G$ contains at least two points.
\end{proof}

By using the idea of the constructions of the previous example and Theorem \ref{teo3} we may establish the following result. 

\begin{theorem}\label{teo4} If $C \subseteq \mathbb{R}$ satisfies that $\omega \leq |C|$ and $|\mathbb{R} \setminus C| = \mathfrak{c}$, then there is $f \in Sel_2(\mathbb{R})$ such that   $C \notin \mathcal{C}_f$ and $C$ is not a $G_\delta$-set in the topology $\tau_f$. 
\end{theorem}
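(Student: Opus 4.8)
The plan is to merge the two ingredients named in the statement: the bookkeeping of Example~\ref{ex1}, which destroys every possible $G_\delta$-representation of $C$, and the device of Theorem~\ref{teo2} (used inside Theorem~\ref{teo3}), which pushes a point of $\mathbb{R}\setminus C$ into $cl_f(C)$; note that ``$C$ is not a $G_\delta$ in $\tau_f$'' already yields $C\notin\tau_f$, so only the ``not closed'' half of Theorem~\ref{teo3} needs to be grafted on. Concretely, first fix a countably infinite set $\{c_n:n\in\mathbb{N}\}\subseteq C$, a point $r^\ast\in\mathbb{R}\setminus C$, and reals $\delta_n>0$ with $\delta_n\to 0$ and with the reals in $\{c_n:n\in\mathbb{N}\}\cup\{r^\ast-\delta_n,\,r^\ast+\delta_n:n\in\mathbb{N}\}$ pairwise distinct; these will feed the Theorem~\ref{teo2} argument verbatim. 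Second, exactly as in Example~\ref{ex1}, enumerate as $\{\{(A^n_\nu,B^n_\nu):n\in\mathbb{N}\}:\nu<\mathfrak{c}\}$ all sequences of pairs of finite nonempty subsets of $\mathbb{R}$ with $\big(\bigcup_{n}A^n_\nu\big)\cap\big(\bigcup_{n}B^n_\nu\big)=\emptyset$; there are only $\mathfrak{c}$ of them since $|[\mathbb{R}]^{<\omega}|=\mathfrak{c}$.

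Next I would choose, by recursion on $\nu<\mathfrak{c}$, a point $p_\nu\in\mathbb{R}\setminus C$ with
\[
p_\nu\notin\Big(\bigcup_{\mu\le\nu}\big((\textstyle\bigcup_{n}A^n_\mu)\cup(\bigcup_{n}B^n_\mu)\big)\Big)\cup\{p_\mu:\mu<\nu\}\cup\{r^\ast\}\cup\{r^\ast-\delta_n,\,r^\ast+\delta_n:n\in\mathbb{N}\}\cup\{c_n:n\in\mathbb{N}\},
\]
which is possible because $|\mathbb{R}\setminus C|=\mathfrak{c}$ while the excluded set has cardinality $<\mathfrak{c}$. Then define $f\in Sel_2(\mathbb{R})$ by declaring: $a<_f p_\nu<_f b$ whenever $\nu<\mathfrak{c}$, $a\in\bigcup_{n}A^n_\nu$ and $b\in\bigcup_{n}B^n_\nu$; $r^\ast-\delta_n<_f c_n<_f r^\ast+\delta_n$ for every $n$; and $f$ agrees with $f_E$ on all remaining pairs, the whole thing arranged (as in the earlier proofs) to have neither an $f$-minimal nor an $f$-maximal point. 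The avoidance conditions guarantee that the two families of prescribed pairs are disjoint (every pair in the first family meets $\{p_\nu:\nu<\mathfrak{c}\}$, which is disjoint from $\{c_n\}\cup\{r^\ast\pm\delta_n\}$), and the same local checks made in Example~\ref{ex1} and Theorem~\ref{teo2} show that no pair receives two contradictory prescriptions, so $f$ is well defined.

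With $f$ in hand, the ``not closed'' half is literally Theorem~\ref{teo2}: no prescribed pair involves $r^\ast$, so $<_f$ is Euclidean on every pair through $r^\ast$, and the argument there gives $r^\ast\in cl_f(C)\setminus C$, hence $C\notin\mathcal{C}_f$. For the $G_\delta$ half, suppose toward a contradiction that $C=\bigcap_{n}U_n$ with each $U_n\in\tau_f$. Since $C\ne\emptyset$, pick $x\in C$; because $f$ has no $f$-minimal or $f$-maximal point, $\{(A,B)_f:A,B\in[\mathbb{R}]^{<\omega}\setminus\{\emptyset\},\ A\cap B=\emptyset\}$ is a base for $\tau_f$, so for each $n$ we may fix such $A^n,B^n$ with $x\in(A^n,B^n)_f\subseteq U_n$; then $x\in\bigcap_{n}(A^n,B^n)_f\subseteq C$. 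From $x\in(A^n,B^n)_f$ for all $n$ we get $a<_f x<_f b$ for every $a\in\bigcup_{n}A^n$ and $b\in\bigcup_{n}B^n$, and asymmetry of $<_f$ forces $\big(\bigcup_{n}A^n\big)\cap\big(\bigcup_{n}B^n\big)=\emptyset$; thus the sequence $\{(A^n,B^n):n\in\mathbb{N}\}$ appears in the enumeration with some index $\mu<\mathfrak{c}$. By construction $p_\mu$ satisfies $a<_f p_\mu<_f b$ for every $a\in\bigcup_{n}A^n_\mu$ and $b\in\bigcup_{n}B^n_\mu$, so $p_\mu\in(A^n_\mu,B^n_\mu)_f$ for every $n$ and therefore $p_\mu\in\bigcap_{n}(A^n,B^n)_f\subseteq C$, contradicting $p_\mu\in\mathbb{R}\setminus C$. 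Hence $C$ is not a $G_\delta$-set in $\tau_f$.

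The only genuine difficulty is bookkeeping rather than a conceptual obstacle: one must verify that the prescribed order relations never clash — which is exactly what the avoidance clauses above secure, mirroring the verifications already done in Example~\ref{ex1} and in Theorems~\ref{teo2}--\ref{teo3} — and one must be sure that the step ``nonempty $G_\delta$ $\Rightarrow$ it contains a nonempty countable intersection of basic sets $(A^n,B^n)_f$ with $\bigl(\bigcup_n A^n\bigr)\cap\bigl(\bigcup_n B^n\bigr)=\emptyset$'' is legitimate, which is precisely where the absence of $f$-extremal points (so that the $(A,B)_f$ form a base) is used, just as in Lemma~\ref{lem0} and Example~\ref{ex1}.
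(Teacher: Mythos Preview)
Your approach faithfully follows the paper's sketch (graft the bookkeeping of Example~\ref{ex1} onto the not-closed device of Theorems~\ref{teo2}--\ref{teo3}), and the $G_\delta$ half is carried out correctly. The ``not closed'' half, however, has a genuine gap --- one already present in the paper's own Theorem~\ref{teo2}, so your instinct to cite that argument verbatim is understandable.

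The two prescriptions $r^\ast-\delta_n<_f c_n<_f r^\ast+\delta_n$ do not by themselves force $r^\ast\in cl_f(C)$. A basic neighborhood of $r^\ast$ has the form $(A,B)_f$ with $A,B$ finite, and to place some $c_n$ inside one needs $a<_f c_n<_f b$ for \emph{every} $a\in A$ and $b\in B$, not merely for the two special endpoints $r^\ast\pm\delta_n$. Concretely, already in the bare Theorem~\ref{teo2} setting, take $r^\ast=0$, $c_n=n+100$, $\delta_n=1/(n+2)$, and let all remaining pairs be Euclidean; then $(\{-1\},\{1\})_f$ contains $0$ (both comparisons through $0$ are Euclidean) yet contains no $c_n$ (the pair $\{c_n,1\}$ is Euclidean and $c_n>1$), so $C=\{c_n:n\in\mathbb{N}\}$ is in fact $\tau_f$-closed and the claimed conclusion $r^\ast\in cl_f(C)$ fails. (Your side claim ``no prescribed pair involves $r^\ast$'' is also false --- $r^\ast$ lies in many of the enumerated $A^n_\nu,B^n_\nu$, so pairs $\{r^\ast,p_\nu\}$ \emph{are} prescribed --- but that is a minor bookkeeping slip next to the issue above.) The repair is to impose many more relations on each $c_n$, so that in the $<_f$ order it behaves like a point genuinely close to $r^\ast$ with respect to \emph{all} comparisons rather than just two; one then has to re-verify compatibility with the $p_\nu$-prescriptions, which calls for slightly sharper avoidance clauses than you wrote.
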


It is shown in \cite[Ex. 3.10]{ag} the existence of a two-point selection $f \in Sel_f(\mathbb{R})$ for which $(0,1)_f \notin \mathcal{M}_f$; in this case, we have that  
$\mathcal{B}_f(\mathbb{R}) \not \subseteq \mathcal{M}_f$. On the other hand,  it is well-known that there are  Lebesgue measurable subsets of $\mathbb{R}$ which are not inside of  
$\mathcal{B}(\mathbb{R})$.  In the next theorem, we will see that for certain subsets $C'$s of $\mathbb{R}$ we can find $f \in Sel_2(\mathbb{R})$ so that $C \in \mathcal{M}_f \setminus  \mathcal{B}_{f}(\mathbb{R})$. To have this done we shall need the next lemmas.

\medskip

Given a bijection $\gamma: \mathbb{R} \to \mathbb{R}$, we denote by $f_\gamma$ the two-point selection  on $\mathbb{R}$ defined by $r <_{f_\gamma} s$ if 
$\gamma(r) < \gamma(s)$ for each $\{r, s\} \in [\mathbb{R}]^2$ (remember that $\leq$ is the standard order on $\mathbb{R}$).  We remark that 
$f_\gamma$ does not have neither a minimal point nor maximal point. 

\begin{lemma}\label{lemaA}  If $\gamma: \mathbb{R} \to \mathbb{R}$ is a bijection, then
\begin{enumerate}  
\item [$\bullet$] $(A,B)_{f_\gamma} = \gamma^{-1}\big((\gamma(A),\gamma(B))\big)$,

\item [$\bullet$] $[A,B)_{f_\gamma} = \gamma^{-1}\big([\gamma(A),\gamma(B))\big)$,

\item [$\bullet$] $(A,B]_{f_\gamma} = \gamma^{-1}\big((\gamma(A),\gamma(B)]\big)$ and

\item [$\bullet$] $[A,B]_{f_\gamma} = \gamma^{-1}\big([\gamma(A),\gamma(B)]\big)$,
\end{enumerate}
for every  $A, B \in [\mathbb{R}]^{< \omega} \setminus \{\emptyset\}$ with $A \cap B = \emptyset$.
\end{lemma}

\begin{proof} We only prove the  first equality. Fix $A, B \in [\mathbb{R}]^{< \omega} \setminus \{\emptyset\}$ with $A \cap B = \emptyset$. Then,
$$
x \in (A,B)_{f_\gamma} \Leftrightarrow a <_{f_\gamma} x <_{f_\gamma}  b \ \text{ for all} \ a \in A \ \text{ and} \ b \in B \Leftrightarrow
$$
$$  
\gamma(a) < \gamma(x) < \gamma(b) \ \text{ for all} \ a \in A \ \text{ and} \ b \in B \Leftrightarrow x \in \gamma^{-1}\big((\gamma(A),\gamma(B))\big).
$$
\end{proof}

\begin{lemma}\label{lemaB}  If $\gamma: \mathbb{R} \to \mathbb{R}$ is a bijection, then $\mathcal{B}_{f_\gamma}(\mathbb{R}) = \gamma^{-1}\big(\mathcal{B}(\mathbb{R})\big)$.
\end{lemma}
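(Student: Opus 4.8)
The plan is to show the two $\sigma$-algebras $\mathcal{B}_{f_\gamma}(\mathbb{R})$ and $\gamma^{-1}\big(\mathcal{B}(\mathbb{R})\big)$ coincide by proving each is contained in the other, exploiting the fact that $\gamma^{-1}$ commutes with all countable Boolean operations (since $\gamma$ is a bijection, $\gamma^{-1}(A\cup B)=\gamma^{-1}(A)\cup\gamma^{-1}(B)$, $\gamma^{-1}(\mathbb{R}\setminus A)=\mathbb{R}\setminus\gamma^{-1}(A)$, etc.), so that $\gamma^{-1}\big(\mathcal{B}(\mathbb{R})\big)$ is itself a $\sigma$-algebra on $\mathbb{R}$ and, more generally, for any family $\mathcal{G}\subseteq\mathcal{P}(\mathbb{R})$ one has $\langle\gamma^{-1}(\mathcal{G})\rangle=\gamma^{-1}\big(\langle\mathcal{G}\rangle\big)$.

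First I would pin down convenient generating families. Since $f_\gamma$ has neither an $f_\gamma$-minimal nor an $f_\gamma$-maximal point, the remark preceding Theorem \ref{1.3} gives that $\{(A,B)_{f_\gamma}: A,B\in[\mathbb{R}]^{<\omega}\setminus\{\emptyset\},\ A\cap B=\emptyset\}$ is a base for $\tau_{f_\gamma}$; since every element of $\tau_{f_\gamma}$ is a countable union of basic sets (the base having cardinality $\mathfrak{c}$ is irrelevant, but note each open set is a union of basic sets and $\mathcal{B}_{f_\gamma}(\mathbb{R})$ is generated by $\tau_{f_\gamma}$, which is in turn generated as a $\sigma$-algebra by this base once one observes it suffices to take the $\sigma$-algebra generated by the subbasic sets $(\leftarrow,r)_{f_\gamma},(r,\rightarrow)_{f_\gamma}$), we get $\mathcal{B}_{f_\gamma}(\mathbb{R})=\langle\{(\leftarrow,r)_{f_\gamma},(r,\rightarrow)_{f_\gamma}: r\in\mathbb{R}\}\rangle$. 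Likewise $\mathcal{B}(\mathbb{R})=\langle\{(\leftarrow,r),(r,\rightarrow): r\in\mathbb{R}\}\rangle$.

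Next I would apply Lemma \ref{lemaA} (in its one-sided rays form, which follows the same way, or directly: $(\leftarrow,r)_{f_\gamma}=\gamma^{-1}\big((\leftarrow,\gamma(r))\big)$ and $(r,\rightarrow)_{f_\gamma}=\gamma^{-1}\big((\gamma(r),\rightarrow)\big)$, by the same $\Leftrightarrow$ chain: $x\in(r,\rightarrow)_{f_\gamma}\Leftrightarrow r<_{f_\gamma}x\Leftrightarrow\gamma(r)<\gamma(x)\Leftrightarrow x\in\gamma^{-1}\big((\gamma(r),\rightarrow)\big)$). Since $\gamma$ is a bijection of $\mathbb{R}$, as $r$ ranges over $\mathbb{R}$ so does $\gamma(r)$, hence the subbase $\{(\leftarrow,r)_{f_\gamma},(r,\rightarrow)_{f_\gamma}:r\in\mathbb{R}\}$ equals $\gamma^{-1}\big(\{(\leftarrow,s),(s,\rightarrow):s\in\mathbb{R}\}\big)$. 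Taking the $\sigma$-algebra generated on both sides and using $\langle\gamma^{-1}(\mathcal{G})\rangle=\gamma^{-1}\big(\langle\mathcal{G}\rangle\big)$, I conclude
$$
\mathcal{B}_{f_\gamma}(\mathbb{R})=\langle\gamma^{-1}\big(\{(\leftarrow,s),(s,\rightarrow):s\in\mathbb{R}\}\big)\rangle=\gamma^{-1}\big(\langle\{(\leftarrow,s),(s,\rightarrow):s\in\mathbb{R}\}\rangle\big)=\gamma^{-1}\big(\mathcal{B}(\mathbb{R})\big).
$$

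The only step requiring a word of care — and the one I would regard as the main (minor) obstacle — is the identity $\langle\gamma^{-1}(\mathcal{G})\rangle=\gamma^{-1}\big(\langle\mathcal{G}\rangle\big)$ for a bijection $\gamma$; this is where bijectivity (not mere measurability) is essential. One direction, $\gamma^{-1}\big(\langle\mathcal{G}\rangle\big)\supseteq\langle\gamma^{-1}(\mathcal{G})\rangle$, is immediate since $\gamma^{-1}\big(\langle\mathcal{G}\rangle\big)$ is a $\sigma$-algebra containing $\gamma^{-1}(\mathcal{G})$. For the reverse, one checks that $\{A\subseteq\mathbb{R}:\gamma^{-1}(A)\in\langle\gamma^{-1}(\mathcal{G})\rangle\}$ is a $\sigma$-algebra containing $\mathcal{G}$ — here surjectivity of $\gamma$ gives $\gamma^{-1}(\mathbb{R}\setminus A)=\mathbb{R}\setminus\gamma^{-1}(A)$ and injectivity is not even needed for this, but both are needed to make $\gamma^{-1}$ a genuine Boolean isomorphism onto its image — so it contains $\langle\mathcal{G}\rangle$, i.e. $\gamma^{-1}\big(\langle\mathcal{G}\rangle\big)\subseteq\langle\gamma^{-1}(\mathcal{G})\rangle$. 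Everything else is a routine bookkeeping of the base/subbase for $\tau_{f_\gamma}$.
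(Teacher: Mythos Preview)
Your proof is correct and follows essentially the same approach as the paper's: both rely on Lemma~\ref{lemaA} together with the fact that for a bijection $\gamma$ one has $\langle\gamma^{-1}(\mathcal{G})\rangle=\gamma^{-1}(\langle\mathcal{G}\rangle)$. The only difference is that the paper applies this with $\mathcal{G}=\tau_E$ itself (noting $\gamma^{-1}(\tau_E)=\tau_{f_\gamma}$, which holds because $\gamma^{-1}$ preserves arbitrary unions), whereas you work with the subbasic rays; your route therefore needs the extra observation---left somewhat muddled in your parenthetical---that the rays already generate $\mathcal{B}_{f_\gamma}(\mathbb{R})$ as a $\sigma$-algebra, which is true here since $(\mathbb{R},<_{f_\gamma})$ is order-isomorphic to $(\mathbb{R},<)$ via $\gamma$ and hence $\tau_{f_\gamma}$ is second countable.
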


\begin{proof} First notice that $\gamma^{-1}\big(\mathcal{B}(\mathbb{R})\big)$ is a $\sigma$-algebra. It follows from Lemma \ref{lemaA} that $\gamma^{-1}(\tau_E) = \tau_{f_\gamma}$ and since $\gamma$ is a bijection, we also have that 
$\gamma^{-1}(\mathcal{C}_E) = \mathcal{C}_{f_\gamma}$. Hence, we deduce that  $\mathcal{B}_{f_\gamma}(\mathbb{R}) = \gamma^{-1}\big(\mathcal{B}(\mathbb{R})\big)$.
\end{proof}

\begin{lemma}\label{lemaC}  If $C \subseteq  \mathbb{R}$ satisfies that  $|\mathbb{R}\setminus C| = \mathfrak{c}$, then
there are $\{ a_n : n \in \mathbb{N} \} \cup \{ b_n : n \in \mathbb{N} \} \subseteq \mathbb{R}\setminus C$ such that 
$b_n - a_n \longrightarrow 0$ and $0 < b_n - a_n $ for every $n \in \mathbb{N}$.
\end{lemma}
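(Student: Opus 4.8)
The plan is to exploit that $\mathbb{R} \setminus C$ is uncountable (it has size $\mathfrak{c}$) in order to produce a single point of $\mathbb{R}\setminus C$ which is an accumulation point of $\mathbb{R}\setminus C$, and then to read off the required pairs $a_n < b_n$ from arbitrarily small neighborhoods of that point.

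First I would recall the classical fact that every uncountable set $S \subseteq \mathbb{R}$ contains a point $x \in S$ which is a limit point of $S$: the set of isolated points of $S$ is countable, since to each isolated point one may assign an open interval with rational endpoints meeting $S$ only in that point, and distinct isolated points receive distinct intervals; as $S$ is uncountable it must then have a non-isolated point, i.e. a limit point of $S$ that belongs to $S$. Applying this with $S := \mathbb{R}\setminus C$, which is uncountable because $|\mathbb{R}\setminus C| = \mathfrak{c}$, we obtain a point $x \in \mathbb{R}\setminus C$ every neighborhood of which contains infinitely many points of $\mathbb{R}\setminus C$.

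Next, fix a sequence of positive reals $\delta_n$ with $\delta_n \to 0$. For each $n \in \mathbb{N}$, since $x$ is a limit point of $\mathbb{R}\setminus C$, there is $y_n \in \mathbb{R}\setminus C$ with $0 < |y_n - x| < \delta_n$. Put $a_n := \min\{x,y_n\}$ and $b_n := \max\{x,y_n\}$. Then $a_n, b_n \in \mathbb{R}\setminus C$, while $0 < b_n - a_n = |y_n - x| < \delta_n$, so $b_n - a_n \to 0$. Hence the families $\{a_n : n \in \mathbb{N}\} \cup \{b_n : n \in \mathbb{N}\} \subseteq \mathbb{R}\setminus C$ have all the required properties.

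There is essentially no obstacle here: the only step worth a sentence of justification is the existence of a self-accumulation point of an uncountable subset of $\mathbb{R}$ (equivalently, that $\mathbb{R}\setminus C$ possesses a condensation point lying in $\mathbb{R}\setminus C$), and in fact the hypothesis $|\mathbb{R}\setminus C| = \mathfrak{c}$ is considerably stronger than needed — mere uncountability of $\mathbb{R}\setminus C$ already yields the conclusion. If one prefers, the same argument can be phrased through condensation points: $\mathbb{R}\setminus C$ being uncountable has a condensation point, all but countably many condensation points lie in $\mathbb{R}\setminus C$, so one may take $x\in\mathbb{R}\setminus C$ with every $\varepsilon$-neighborhood of $x$ containing uncountably many — in particular at least two — points of $\mathbb{R}\setminus C$, and the pairs are extracted as above.
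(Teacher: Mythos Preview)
Your argument is correct. The key observation---that any uncountable subset of $\mathbb{R}$ has a point that is an accumulation point of itself---is exactly what is needed, and your extraction of the pairs $(a_n,b_n)$ from shrinking neighborhoods of such a point is clean and complete.

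The paper takes a somewhat different route: it covers $\mathbb{R}$ by countably many short closed intervals, uses $\operatorname{cf}(\mathfrak{c})>\omega$ to find one interval meeting $\mathbb{R}\setminus C$ in a set of size $\mathfrak{c}$, and then recursively builds nested closed intervals $[a_{n_k},b_{n_k}]$ with endpoints in $\mathbb{R}\setminus C$, lengths below $2^{-k}$, and each still meeting $\mathbb{R}\setminus C$ in $\mathfrak{c}$ many points. The endpoint sequences $\{a_{n_k}\}$, $\{b_{n_k}\}$ then witness the lemma. This nested-interval construction leans on the full hypothesis $|\mathbb{R}\setminus C|=\mathfrak{c}$ at every stage (to guarantee that after subdividing into finitely or countably many pieces one piece still has size $\mathfrak{c}$), whereas your self-accumulation-point argument, as you observe, needs only that $\mathbb{R}\setminus C$ be uncountable. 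Your approach is therefore both shorter and a genuine strengthening of the lemma; the paper's approach, on the other hand, produces the pairs as nested intervals, which is not needed for the application in Theorem~\ref{teoremaA} but could conceivably be useful elsewhere.
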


\begin{proof} First, put $\mathbb{R} = \bigcup_{n \in \mathbb{N}}[a_n,b_n]$ where $0 < b_n - a_n < \frac{1}{2}$ for every $n \in \mathbb{N}$. Since $|\mathbb{R}\setminus C| = \mathfrak{c}$, then there is $m \in \mathbb{N}$ such that $|[a_m,b_m] \cap (\mathbb{R}\setminus C)| = \mathfrak{c}$. Inside of this last set 
we can find $a_{n_1}, b_{n_1} \notin C$ such that $a_{n_1} < b_{n_1}$,   $|[a_{n_1},b_{n_1}] \cap (\mathbb{R}\setminus C)| = \mathfrak{c}$ and
$[a_{n_1}, b_{n_1}] \subseteq [a_m,b_m]$. Set  $A_1 := [a_{n_1},b_{n_1}] \cap (\mathbb{R}\setminus C)$. By continuing this procedure via  induction, for every $k \in \mathbb{N}$ we obtain a close interval $[a_{n_k},b_{n_k}]$ and $A_k$ such that 
\begin{enumerate}
\item[$\bullet$] $a_{n_k}, b_{n_k} \notin C$;

\item[$\bullet$] $0 < b_{n_k} - a_{n_k} < \frac{1}{2^k}$;

\item[$\bullet$] $A_k :=  [a_{n_k},b_{n_k}] \cap (\mathbb{R}\setminus C)$; and

\item[$\bullet$] $|A_k| = \mathfrak{c}$.
\end{enumerate}
This shows the lemma.
\end{proof}

\begin{theorem}\label{teoremaA}  If $C \subseteq  \mathbb{R}$ satisfies that  $|C| = |\mathbb{R}\setminus C| = \mathfrak{c}$, then there is $f \in Sel_2(\mathbb{R})$ such that $C \in \mathcal{M}_f \setminus \mathcal{B}_f(\mathbb{R})$.
\end{theorem}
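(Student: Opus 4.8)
The plan is to combine the bijection machinery of Lemmas~\ref{lemaA}--\ref{lemaB} with the interval-perturbation idea of Theorem~\ref{teo2} and the covering data extracted in Lemma~\ref{lemaC}, and then invoke the classical fact (from \cite{cohn}) that there is a Lebesgue measurable set which is not Borel. First I would use the hypothesis $|C| = |\mathbb{R} \setminus C| = \mathfrak{c}$ to choose a bijection $\gamma : \mathbb{R} \to \mathbb{R}$ that carries $C$ onto a prescribed measurable non-Borel set; more precisely, fix a Lebesgue measurable set $N \subseteq \mathbb{R}$ with $N \notin \mathcal{B}(\mathbb{R})$ and with $|N| = |\mathbb{R} \setminus N| = \mathfrak{c}$ (such an $N$ exists: e.g.\ a measure-zero non-Borel subset of the Cantor set, together with, say, an interval removed from its complement), and pick a bijection $\gamma$ with $\gamma(C) = N$. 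Then by Lemma~\ref{lemaB} we have $\mathcal{B}_{f_\gamma}(\mathbb{R}) = \gamma^{-1}(\mathcal{B}(\mathbb{R}))$, so $C = \gamma^{-1}(N) \notin \mathcal{B}_{f_\gamma}(\mathbb{R})$ precisely because $N \notin \mathcal{B}(\mathbb{R})$. This disposes of the "$\notin \mathcal{B}_f(\mathbb{R})$" half, but the selection $f_\gamma$ need not make $C$ $\lambda_{f_\gamma}$-measurable, so $f_\gamma$ by itself is not the $f$ we want.

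The second step is to repair measurability without destroying the Borel computation. The idea is that $C \in \mathcal{M}_f$ will follow if we arrange $\lambda_f(C) = 0$, since $\mathcal{N}_f \subseteq \mathcal{M}_f$. To force $\lambda_f(C) = 0$, I would modify $\gamma$ (or rather modify $f_\gamma$ on a controlled set of pairs) so that $C$ can be covered by countably many semi-open $f$-intervals of arbitrarily small total Euclidean length. Concretely: enumerate a countable dense-enough structure and, imitating Theorem~\ref{teo2} and Example~\ref{ex1}, for each point $c$ of a suitable countable subset of $C$ insert $c$ into an $f$-interval $(a,b]_f$ whose endpoints $a,b$ are chosen outside $C$ with $b - a$ as small as we like — here Lemma~\ref{lemaC} supplies, for $\mathbb{R} \setminus C$, pairs $a_n < b_n$ with $b_n - a_n \to 0$, which is exactly the raw material needed to thread shrinking intervals around points of $C$. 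One must do this for \emph{all} of $C$, not just a countable piece; the device is to cover $C$ by $\mathfrak{c}$-many such intervals but, because we are computing an outer measure via \emph{countable} covers, we instead split $C$ itself and carry out a transfinite construction of length $\mathfrak{c}$ that assigns to the $\nu$-th point of $C$ a tiny $f$-interval, in such a way that any given countable subfamily of these intervals already covers all of $C$ up to a null set. The cleanest realization is to first partition $\mathbb{R}$ into a copy of $\mathbb{Q}$ (or of a countable dense set) scaled down, and force $C$ to sit inside the $f$-closure of that countable set with $f$-intervals of vanishing length.

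A more economical route, which I would actually pursue in writing the proof, avoids re-doing the transfinite construction: take $f := h \oplus f_E^{Y}$-type decompositions via Theorem~\ref{oplus}. Write $\mathbb{R} = Z \cup W$ with $Z \supseteq C$ a set of size $\mathfrak{c}$ chosen so that $\lambda(Z) = 0$ in the Euclidean sense is \emph{not} possible in general — so instead choose $Z$ with $|Z| = |W| = \mathfrak{c}$, put $f \upharpoonright [Z]^2 := f_{\gamma|_Z}$ for a bijection $\gamma|_Z : Z \to N'$ onto a measure-zero non-Borel set $N' \subseteq \mathbb{R}$ with $\gamma(C) = N'$, and let $f$ agree with $f_E$ on the remaining pairs (with $Z$ placed entirely to the left of $W$, as in the definition of $\oplus$). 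By Theorem~\ref{oplus}, $\mathcal{B}_f(\mathbb{R}) = \mathcal{B}_{f_{\gamma|_Z}}(Z) \oplus \mathcal{B}(W)$, and by the relativized form of Lemma~\ref{lemaB} the first summand is $(\gamma|_Z)^{-1}(\mathcal{B}(\mathbb{R})|_{N'})$, so $C \notin \mathcal{B}_f(\mathbb{R})$ because $N' = \gamma(C)$ is not Borel. Meanwhile, since $N' \subseteq \mathbb{R}$ has Lebesgue outer measure zero and the $f$-intervals inside $Z$ pull back, via Lemma~\ref{lemaA}, to $f$-intervals, a short computation gives $\lambda_f(C) \le \lambda(N') = 0$, whence $C \in \mathcal{N}_f \subseteq \mathcal{M}_f$.

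\textbf{Main obstacle.} The delicate point is the interaction between the two requirements. Making $C$ non-Borel in $\tau_f$ wants $f$ restricted to $C$ to be "as rigid as a Euclidean copy of a pathological set," while making $C$ $\lambda_f$-null wants $f$ to admit tiny semi-open $f$-intervals swallowing $C$ — and the outer measure $\lambda_f$ is defined using $f$-intervals that may cross between $C$ and $\mathbb{R} \setminus C$, so one cannot reason about $C$ in isolation. The key technical lemma I expect to need, and the step that will require genuine care, is: if $\gamma$ is a bijection and $E = \gamma^{-1}(S)$ with $\lambda^{*}(S) = 0$ witnessed by Euclidean intervals $(p_n, q_n]$, then the pulled-back $f_\gamma$-intervals $(\gamma^{-1}(p_n), \gamma^{-1}(q_n)]_{f_\gamma}$ need \emph{not} have small Euclidean length, so $\lambda_{f_\gamma}(E) = 0$ is false in general — which is precisely why one must instead build the shrinking intervals around $C$ by hand using Lemma~\ref{lemaC}, rather than transporting a Euclidean cover. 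Reconciling "rigid enough to be non-Borel" with "threaded by shrinking real intervals" is the crux, and the way through is to keep the non-Borel obstruction living on a \emph{countable} parameter set (the sequence of $G_\delta$-codes, as in Example~\ref{ex1}) while the null cover uses the $\mathfrak{c}$-indexed shrinking intervals from Lemma~\ref{lemaC}; these two families can be made to interfere only on a set of size $<\mathfrak{c}$, which is $\lambda_f$-null anyway.
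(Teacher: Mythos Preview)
Your proposal identifies all the right ingredients --- Lemma~\ref{lemaB} for the non-Borel half, Lemma~\ref{lemaC} for shrinking intervals with endpoints outside $C$, and the idea that $\lambda_f(C)=0$ would give $C\in\mathcal{N}_f\subseteq\mathcal{M}_f$ --- but you never actually assemble them, and your concrete attempts do not close the gap. In your ``economical'' $\oplus$-route the claimed inequality $\lambda_f(C)\le\lambda(N')$ is exactly the step you yourself flag as false in the ``main obstacle'' paragraph: $\lambda_f$ measures $f$-intervals by the \emph{Euclidean} distance $|s-r|$ of their endpoints, not by the length of the image under $\gamma$, and moreover the endpoints $p_n,q_n$ of a Euclidean cover of $N'$ typically lie outside $N'=\gamma(Z)$ and so cannot be pulled back through $\gamma|_Z$ at all. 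The transfinite and perturbation suggestions remain vague and do not explain why a countable cover of small total length would exist.

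The missing idea is much simpler than anything you propose: there is no tension between the two requirements, because a \emph{single} bijection $\gamma$ can be chosen to do both jobs at once. Take the points $a_n,b_n\in\mathbb{R}\setminus C$ with $0<b_n-a_n\to 0$ from Lemma~\ref{lemaC}, take a non-Borel set $E$ with $|E|=\mathfrak{c}$ lying inside a fixed bounded interval, say $E\subseteq(-1,2)$, and now choose $\gamma$ so that $\gamma(C)=E$ \emph{and} $\gamma(a_n)=-1-\tfrac{1}{n}$, $\gamma(b_n)=2+\tfrac{1}{n}$. This is possible because $\{a_n,b_n\}\subseteq\mathbb{R}\setminus C$ and $\{-1-\tfrac{1}{n},\,2+\tfrac{1}{n}\}\subseteq\mathbb{R}\setminus E$, with both complements of size $\mathfrak{c}$. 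Then by Lemma~\ref{lemaA} the single $f_\gamma$-interval $(a_n,b_n]_{f_\gamma}=\gamma^{-1}\big((-1-\tfrac{1}{n},\,2+\tfrac{1}{n}]\big)$ contains all of $C$, and its contribution to the outer measure is $|b_n-a_n|\to 0$; hence $\lambda_{f_\gamma}(C)=0$. Lemma~\ref{lemaB} gives $C\notin\mathcal{B}_{f_\gamma}(\mathbb{R})$ as you already argued. No modification of $f_\gamma$, no $\oplus$-splitting, and no transfinite bookkeeping is needed.
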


\begin{proof}  We use the sets $\{ a_n : n \in \mathbb{N} \}$ and  $\{ b_n : n \in \mathbb{N} \}$ given by Lemma  \ref{lemaC} whose elements satisfy
that $a_{n}, b_{n} \notin C$, $b_n - a_n \longrightarrow 0$ and $0 < b_n - a_n $ for every $n \in \mathbb{N}$.
We know that there is $E \notin \mathcal{B}(\mathbb{R})$(besides, it is non-Lebesgue measurable) such that  $|E| = \mathfrak{c}$ and $E \subseteq (-1,2)$.
(see the book \cite[Th. 1.4.7, p. 32]{cohn}). Fix a bijection$\gamma: \mathbb{R} \to \mathbb{R} $ so that 
\begin{enumerate}
\item[$\bullet$] $\gamma(a_n)  = -1 - \frac{1}{n}$ and $\gamma(b_n)  = 2 + \frac{1}{n}$ for every $n \in \mathbb{N}$;

\item[$\bullet$] $\gamma(C) = E$; and

\item[$\bullet$] $\gamma(\mathbb{R} \setminus \big( C \cup \{ a_n : n \in \mathbb{N} \} \cup \{ b_n : n \in \mathbb{N} \} \big)  = \mathbb{R} \setminus \big(E \cup \{ -1 - \frac{1}{n} : n \in \mathbb{N} \} \cup \{ 2  + \frac{1}{n} : n \in \mathbb{N} \}\big)$.
\end{enumerate}
It follows from Lemma \ref{lemaB} that $C \notin   \gamma^{-1}\big(\mathcal{B}(\mathbb{R})\big) = \mathcal{B}_{f_\gamma}(\mathbb{R})$. Since 
$C \subseteq (a_n,b_n)_{f_\gamma}$ for every $n \in \mathbb{N}$, we must have that $\lambda_{f_\gamma}(C) = 0$, and hence we obtain that 
$C \in \mathcal{N}_{f_\gamma}  \subseteq \mathcal{M}_{f_\gamma} $.
\end{proof}

Next we generalize the technique used in the proof of the previous theorem. 

\medskip

Let $\gamma: \mathbb{R} \to \mathbb{R}$ be a bijection and $g \in Sel_2(\mathbb{R})$. Then, we define $f_{g,\gamma} \in Sel_2(\mathbb{R})$  by 
$r <_{f_{g,\gamma}} s$ if $\gamma(r) <_g \gamma(s)$ for each $\{r, s\} \in [\mathbb{R}]^2$. Observe  that if $g$ does not have neither minimal point nor maximal point, then $f_{g,\gamma}$ has the same property. We may also generalize Lemma \ref{lemaA}.

\begin{proposition}\label{lemaC}  If $\gamma: \mathbb{R} \to \mathbb{R}$ is a bijection and  $g \in Sel_2(\mathbb{R})$, then
\begin{enumerate}  
\item [$\bullet$] $(A,B)_{f_{g,\gamma}} = \gamma^{-1}\big((\gamma(A),\gamma(B))_g\big)$,

\item [$\bullet$] $[A,B)_{f_{g,\gamma}} = \gamma^{-1}\big([\gamma(A),\gamma(B)_g)\big)$,

\item [$\bullet$] $(A,B]_{f_{g,\gamma}} = \gamma^{-1}\big((\gamma(A),\gamma(B)]_g\big)$ and

\item [$\bullet$] $[A,B]_{f_{g,\gamma}} = \gamma^{-1}\big([\gamma(A),\gamma(B)]_g\big)$,
\end{enumerate}
for every  $A, B \in [\mathbb{R}]^{< \omega} \setminus \{\emptyset\}$ with $A \cap B = \emptyset$.
\end{proposition}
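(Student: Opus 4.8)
The plan is to imitate the proof of Lemma \ref{lemaA}, replacing the Euclidean order $<$ by the order-like relation $<_g$ and using only the defining property of $f_{g,\gamma}$, namely that $r <_{f_{g,\gamma}} s$ if and only if $\gamma(r) <_g \gamma(s)$. I will prove the first equality in detail and indicate the purely notational modifications needed for the remaining three.

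First I would fix $A, B \in [\mathbb{R}]^{< \omega} \setminus \{\emptyset\}$ with $A \cap B = \emptyset$ and record two elementary observations about the bijection $\gamma$: the images $\gamma(A)$ and $\gamma(B)$ again lie in $[\mathbb{R}]^{< \omega} \setminus \{\emptyset\}$ and satisfy $\gamma(A) \cap \gamma(B) = \emptyset$, so that the $g$-intervals $(\gamma(A),\gamma(B))_g$, $[\gamma(A),\gamma(B))_g$, etc., are well defined; and, since $\gamma$ is injective, $a = x$ iff $\gamma(a) = \gamma(x)$ for all $a, x \in \mathbb{R}$, which is what allows the half-open and closed endpoints to be transported. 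Then, unwinding the definition of $(A,B)_{f_{g,\gamma}}$ as $\big(\bigcap_{a \in A}(a,\rightarrow)_{f_{g,\gamma}}\big) \cap \big(\bigcap_{b \in B}(\leftarrow,b)_{f_{g,\gamma}}\big)$, I would run the chain
$$
x \in (A,B)_{f_{g,\gamma}} \Leftrightarrow a <_{f_{g,\gamma}} x <_{f_{g,\gamma}} b \ \text{ for all} \ a \in A \ \text{ and} \ b \in B \Leftrightarrow
$$
$$
\gamma(a) <_g \gamma(x) <_g \gamma(b) \ \text{ for all} \ a \in A \ \text{ and} \ b \in B \Leftrightarrow \gamma(x) \in (\gamma(A),\gamma(B))_g \Leftrightarrow x \in \gamma^{-1}\big((\gamma(A),\gamma(B))_g\big),
$$
where the next-to-last equivalence uses that $a \mapsto \gamma(a)$ maps $A$ onto $\gamma(A)$ and $B$ onto $\gamma(B)$ bijectively, so quantifying over $A$ (resp.\ $B$) is the same as quantifying over $\gamma(A)$ (resp.\ $\gamma(B)$).

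For the remaining three identities I would repeat the same computation, the only change being that a left endpoint in $A$ now contributes the clause $a \leq_{f_{g,\gamma}} x$ in place of $a <_{f_{g,\gamma}} x$, and a right endpoint in $B$ contributes $x \leq_{f_{g,\gamma}} b$ in place of $x <_{f_{g,\gamma}} b$; by the injectivity observation, $a \leq_{f_{g,\gamma}} x \Leftrightarrow \gamma(a) \leq_g \gamma(x)$ and $x \leq_{f_{g,\gamma}} b \Leftrightarrow \gamma(x) \leq_g \gamma(b)$, so these clauses translate exactly to the closed-endpoint clauses defining $[\gamma(A),\gamma(B))_g$, $(\gamma(A),\gamma(B)]_g$ and $[\gamma(A),\gamma(B)]_g$. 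I do not expect any genuine obstacle: although $<_g$ need not be transitive (unlike in an ordered space), the argument never invokes transitivity, since every step is a direct translation of a membership condition through the defining equivalence of $f_{g,\gamma}$ together with the bijectivity of $\gamma$, exactly as in Lemma \ref{lemaA}. The only point worth a word of care is the bookkeeping that $\gamma(A)$ and $\gamma(B)$ are legitimate (nonempty, finite, disjoint) arguments of the $g$-interval notation, which is immediate from $\gamma$ being a bijection.
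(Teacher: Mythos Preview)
Your proof is correct and is exactly the approach the paper intends: the paper states this proposition without proof, presenting it explicitly as the generalization of Lemma \ref{lemaA} obtained by replacing the Euclidean order $<$ by $<_g$, and your argument carries out precisely that substitution in the chain of equivalences from the proof of Lemma \ref{lemaA}. Your additional remarks on why $\gamma(A)$, $\gamma(B)$ are admissible arguments and why injectivity handles the closed endpoints are appropriate bookkeeping that the paper leaves implicit.
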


Besides, we may formulate the generalization of Theorem \ref{teoremaA}.

\begin{proposition}  If $\gamma: \mathbb{R} \to \mathbb{R}$ is a bijection and  $g \in Sel_2(\mathbb{R})$, then $\mathcal{B}_{f_{g,\gamma}}(\mathbb{R}) = \gamma^{-1}\big(\mathcal{B}_g(\mathbb{R})\big)$.\end{proposition}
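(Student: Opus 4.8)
The plan is to mimic, essentially verbatim, the proof of Lemma~\ref{lemaB}, replacing the Euclidean data $(\tau_E,\mathcal{C}_E,\mathcal{B}(\mathbb{R}))$ by $(\tau_g,\mathcal{C}_g,\mathcal{B}_g(\mathbb{R}))$ and invoking the preceding proposition (the analogue of Lemma~\ref{lemaA} for $f_{g,\gamma}$) in place of Lemma~\ref{lemaA}. First I would record the general fact that, for \emph{any} function $\gamma:\mathbb{R}\to\mathbb{R}$ and any $\emptyset\neq\mathcal{F}\subseteq\mathcal{P}(\mathbb{R})$, one has $\gamma^{-1}(\langle\mathcal{F}\rangle)=\langle\gamma^{-1}(\mathcal{F})\rangle$, where $\gamma^{-1}(\mathcal{F}):=\{\gamma^{-1}(F):F\in\mathcal{F}\}$. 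Indeed $\gamma^{-1}(\langle\mathcal{F}\rangle)$ is a $\sigma$-algebra (preimages commute with complements and countable unions) containing $\gamma^{-1}(\mathcal{F})$, which gives $\langle\gamma^{-1}(\mathcal{F})\rangle\subseteq\gamma^{-1}(\langle\mathcal{F}\rangle)$; conversely $\{B\subseteq\mathbb{R}:\gamma^{-1}(B)\in\langle\gamma^{-1}(\mathcal{F})\rangle\}$ is a $\sigma$-algebra containing $\mathcal{F}$, hence contains $\langle\mathcal{F}\rangle$, yielding the reverse inclusion. In particular $\gamma^{-1}(\mathcal{B}_g(\mathbb{R}))$ is a $\sigma$-algebra and it equals $\langle\gamma^{-1}(\tau_g)\rangle$.

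Next I would identify $\gamma^{-1}(\tau_g)$ with $\tau_{f_{g,\gamma}}$. Taking $|A|=|B|=1$ in the preceding proposition gives $(s,\rightarrow)_{f_{g,\gamma}}=\gamma^{-1}\big((\gamma(s),\rightarrow)_g\big)$ and $(\leftarrow,s)_{f_{g,\gamma}}=\gamma^{-1}\big((\leftarrow,\gamma(s))_g\big)$ for every $s\in\mathbb{R}$ — equivalently this is immediate from the defining equivalence $r<_{f_{g,\gamma}}s\Leftrightarrow\gamma(r)<_g\gamma(s)$. Since $\gamma$ is onto, as $s$ ranges over $\mathbb{R}$ the points $\gamma(s)$ exhaust $\mathbb{R}$, so $\gamma^{-1}$ carries the canonical subbase $\{(\leftarrow,t)_g,(t,\rightarrow)_g:t\in\mathbb{R}\}$ of $\tau_g$ bijectively onto the canonical subbase of $\tau_{f_{g,\gamma}}$. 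Because $\gamma^{-1}$ commutes with arbitrary unions and finite intersections, it therefore carries $\tau_g$ onto $\tau_{f_{g,\gamma}}$; that is, $\gamma^{-1}(\tau_g)=\tau_{f_{g,\gamma}}$. Here we also use, as already remarked, that $f_{g,\gamma}$ is a genuine two-point selection without $f_{g,\gamma}$-extremal points, so that this subbase does generate $\tau_{f_{g,\gamma}}$.

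Finally, combining the two steps, $\mathcal{B}_{f_{g,\gamma}}(\mathbb{R})=\langle\tau_{f_{g,\gamma}}\rangle=\langle\gamma^{-1}(\tau_g)\rangle=\gamma^{-1}(\langle\tau_g\rangle)=\gamma^{-1}(\mathcal{B}_g(\mathbb{R}))$. I do not expect any genuine obstacle here: the only point requiring a little care is the interchange $\gamma^{-1}(\langle\,\cdot\,\rangle)=\langle\gamma^{-1}(\,\cdot\,)\rangle$ from the first paragraph, which is the standard ``generated $\sigma$-algebra pulls back'' lemma and uses nothing about $\gamma$ beyond being a function; bijectivity of $\gamma$ enters only to make $f_{g,\gamma}$ well defined and to guarantee the surjectivity of the subbase correspondence used above.
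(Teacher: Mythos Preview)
Your proof is correct and follows precisely the template the paper intends: the paper states this proposition without proof, as the direct generalization of Lemma~\ref{lemaB}, and your argument is that proof transported verbatim with $(\tau_E,\mathcal{B}(\mathbb{R}))$ replaced by $(\tau_g,\mathcal{B}_g(\mathbb{R}))$ and Lemma~\ref{lemaA} replaced by its analogue (Proposition~\ref{lemaC}). If anything you are more careful than the paper, making explicit the standard pull-back identity $\gamma^{-1}(\langle\mathcal{F}\rangle)=\langle\gamma^{-1}(\mathcal{F})\rangle$ that the paper's two-line sketch of Lemma~\ref{lemaB} leaves implicit.
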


Let $g$ be the two-point selection given in Example \ref{discrete} for which $\tau_g$ is the discrete topology. For this two-point selection, we have that 
$ \mathcal{B}_{f_{g,\gamma}}(\mathbb{R}) = \mathcal{P}(\mathbb{R})$ for every bijection $\gamma: \mathbb{R} \to \mathbb{R}$.
This fact suggests the following question:

\begin{question}\label{q3} Is there $g \in Sel_2(\mathbb{R})$ such that $|\{ \mathcal{B}_{f_{g,\gamma}(\mathbb{R})} \ | \ \gamma: \mathbb{R} \to \mathbb{R} \ \text{is a bijection} \ \}| = \mathfrak{c}$?
\end{question}

We could not answer this question, but it is possible to prove that this set is infinite for the Euclidian two-point selection $f_E$. Indeed, consider the quotient group $\mathbb{R}/\mathbb{Q}$ and  the non-Lebesgue measurable subsets  $ \{ E_n : n \in \mathbb{N}\}$ constructed in \cite[Th. 1.4.7, p. 32]{cohn} such that: 
\begin{enumerate}
\item[$\bullet$] $E_n \cap E_m = \emptyset$ for $n ,m \in \mathbb{N}$ with $n < m$;

\item[$\bullet$] If $E_n + \mathbb{Q} = \mathbb{R}$ for every $n \in \mathbb{N}$;

\item[$\bullet$] for every $n \in \mathbb{N}$, we have that $x - y \notin \mathbb{Q}$ for distinct points $x, y \in E_n$;  and

\item[$\bullet$] $(0,1) \subseteq \bigcup_{n \in \mathbb{N}}E_n \subseteq (-1,2)$. 
\end{enumerate}
Observe that $|E_n|= \mathfrak{c}$ for every $n \in \mathbb{N}$. For each positive integer $n \in \mathbb{N}$ we define a bijection $\gamma_n: \mathbb{R} \to \mathbb{R}$ so that:
\begin{enumerate}
\item[$\bullet$]  $\gamma_n$ is a bijection between $E_n$ and $(- \infty,-1)$;

\item[$\bullet$] $\gamma_n$  is the identity function on $E_m$ for all $m \in \mathbb{N} \setminus \{0, n\}$; and

\item[$\bullet$]  $\gamma_n$ is a bijection between $\mathbb{R} \setminus \big(\bigcup_{m \in \mathbb{N}}E_m\big)$ and $[-1,+\infty) \setminus  \big(\bigcup_{n \neq m \in \mathbb{N}}E_m\big)$.
\end{enumerate}
Then for all $n\in \mathbb{N}$ we have that $\gamma_n^{-1}((-\infty,-1)) = E_n \in \mathcal{B}_{f_{f_E,\gamma_n}}(\mathbb{R})$ and  $\gamma_n^{-1}(E_m) = E_m \notin \mathcal{B}_{f_{f_E,\gamma_n}}(\mathbb{R})$ for every $n \neq m \in \mathbb{N}$. Thus,  $\mathcal{B}_{f_{f_E,\gamma_n}}(\mathbb{R})  \neq \mathcal{B}_{f_{f_E,\gamma_m}}(\mathbb{R})$ for distinct $n ,m \in \mathbb{N}$.

\medskip

We finish with the formulation of a  question related to Theorem \ref{muchos}. 

\begin{question} In a model of $ZFC$,  are there $2^{2^{\mathfrak{c}}}$  many $\sigma$-algebras  on $\mathbb{R}$ which contain $[\mathbb{R}]^{\leq \omega}$?  
\end{question}

There is a general method of construction of $\sigma$-algebras on $\mathbb{R}$  which is nicely described in \cite[p. 100]{cie}:  If $\mathcal{I}$ is a $\sigma$-ideal and $\mathcal{A}$ is a $\sigma$-algebra,  both on $\mathbb{R}$,  then $\mathcal{A}[\mathcal{I}] := \{  A \triangle I  :  A \in \mathcal{A} \ \text{and} \ I \in \mathcal{I} \}$  is a
 $\sigma$-algebra on $\mathbb{R}$, where $A \triangle I = (A \setminus I) \cup (I \setminus A)$. Besides, if either $\mathcal{I}$ or   $\mathcal{A}$ contains 
 $[\mathbb{R}]^{\leq \omega}$, then  $\mathcal{A}[\mathcal{I}]$ is a  $\sigma$-algebra that contains $[\mathbb{R}]^{\leq \omega}$. In particular, if $\mathcal{N}$ is the 
 $\sigma$-ideal of Lebesgue-zero measure subsets of $\mathbb{R}$, then $\mathcal{B}(\mathbb{R})[\mathcal{N}]$ is the $\sigma$-algebra
 of Lebesgue-measurable subsets of $\mathbb{R}$, and if $\mathcal{M}$ is the $\sigma$-ideal of meager set of $\mathbb{R}$, then  $\mathcal{B}(\mathbb{R})[\mathcal{M}]$ is the $\sigma$-algebra consisting of the subsets of $\mathbb{R}$ with the Baire property. It seems natural to ask whether or not  these two $\sigma$-algebras are the Borel $\sigma$-algebras of topologies coming from two-point selections:
    
 \begin{question} Are there $f, g \in Sel_2(\mathbb{R})$  such that $\mathcal{B}_f(\mathbb{R})  = \mathcal{B}(\mathbb{R})[\mathcal{N}]$ and $\mathcal{B}_g(\mathbb{R})  = \mathcal{B}(\mathbb{R})[\mathcal{M}]$?  
\end{question}

 Finally, let us recall the definition of the Baire $\sigma$-algebra, denoted by $Baire(X)$, of a  Tychonoff space $X$: If $C(X)$ is the set of all continuous real-valued functions, then  $Baire(X)$ is the smallest $\sigma$-algebra on $X$ that makes each function in $C(X)$ measurable. Alternatively,  $Baire(X)$ is the $\sigma$-algebra generated by all sets of the form $\{ x \in X : F(x) > 0\}$ where $F \in C(X)$. Observe that $Baire(X) \subseteq  \mathcal{B}(X)$. Also we have that if $X$ is perfectly normal, then $Baire(X) = \mathcal{B}(X)$.  In particular, we obtain that $Baire(\mathbb{R}) = \mathcal{B}(\mathbb{R})$ and if $X$ is first countable, then $[X]^{\leq \omega} \subseteq Baire(X)$.  It would be nice to find $f \in Sel_2(\mathbb{R})$ such that $Baire((\mathbb{R},\tau_f))  \varsubsetneqq  \mathcal{B}_f(\mathbb{R})$.
Given $f \in Sel_2(\mathbb{R})$,  we may ask if $\mathfrak{B}$ is a base for the topology $\tau_f$, is there $g \in Sel_2(\mathbb{R})$ such that $\langle \mathfrak{B} \rangle = \mathcal{B}_g(\mathbb{R})$? Unfortunately, the answer to this question is negative.  Indeed, if $f \in Sel_2(\mathbb{R})$ is the two-point selection of Example \ref{discrete} such that $\tau_f$ is the discrete topology and $\mathfrak{B} := \{ \{r\} : r \in \mathbb{R}\}$, then $\langle \mathfrak{B} \rangle = \mathcal{C}(\mathbb{R}) \neq \mathcal{B}_g(\mathbb{R})$ for any $g \in Sel_2(\mathbb{R})$ (see  Theorem  \ref{cocountablenoBorel}).


\def\polhk#1{\setbox0=\hbox{#1}{\ooalign{\hidewidth
  \lower1.5ex\hbox{`}\hidewidth\crcr\unhbox0}}}
  \def\polhk#1{\setbox0=\hbox{#1}{\ooalign{\hidewidth
  \lower1.5ex\hbox{`}\hidewidth\crcr\unhbox0}}} \def\cprime{$'$}
  \def\cprime{$'$}

\bibliographystyle{plain}

\bibliography{bibbohr,bibpseudos}

\end{document}